\documentclass[10pt]{amsart}

\pagestyle{headings}

\usepackage{amsmath, mathtools, amsthm, amscd, amssymb, amsfonts, pstricks,bbm, verbatim, mathrsfs}
\usepackage[super]{nth}

\newcommand{\R}{{\mathbb R}}

\newcommand{\1}{\mathbbm{1}}

\DeclareMathOperator{\dist}{dist}
\DeclareMathOperator{\rank}{rank}

\DeclareMathOperator{\pos}{pos}
\DeclareMathOperator{\expan}{expansion}
\DeclareMathOperator{\cont}{contraction}

\newtheorem{theorem}{Theorem}[section]
\newtheorem{lemma}[theorem]{Lemma}
\newtheorem{proposition}[theorem]{Proposition}
\newtheorem{corollary}[theorem]{Corollary}

\newtheorem{definition}[theorem]{Definition}

\numberwithin{equation}{section}

\theoremstyle{definition}

\newtheorem{remark}[theorem]{Remark}

\begin{document}
	
	\title[Negative Type and Bi-lipschitz Embeddings Into Hilbert Space]{Negative Type and Bi-lipschitz Embeddings Into Hilbert Space}
	\author[Gavin Robertson]{Gavin Robertson}
	\email{gavin.robertson@unsw.edu.au}
	\address{School of Mathematics and Statistics, University of New South Wales, Sydney, New South Wales 2052, Australia}
	
	\begin{abstract}
		The usual theory of negative type (and $p$-negative type) is heavily dependent on an embedding result of Schoenberg, which states that a metric space isometrically embeds in some Hilbert space if and only if it has $2$-negative type. A generalisation of this embedding result to the setting of bi-lipschitz embeddings was given by Linial, London and Rabinovich. In this article we use this newer embedding result to define the concept of distorted $p$-negative type and extend much of the known theory of $p$-negative type to the setting of bi-lipschitz embeddings. In particular we show that a metric space $(X,d_{X})$ has $p$-negative type with distortion $C$ ($0\leq p<\infty$, $1\leq C<\infty$) if and only if $(X,d_{X}^{p/2})$ admits a bi-lipschitz embedding into some Hilbert space with distortion at most $C$. Analogues of strict $p$-negative type and polygonal equalities in this new setting are given and systematically studied. Finally, we provide explicit examples of these concepts in the bi-lipschitz setting for the bipartite graphs $K_{m,n}$ and the Hamming cube $H_{n}$.
	\end{abstract}
	\maketitle
	
	\section{Introduction}\label{Sec 1}
	
	The theory of embeddings of metric spaces has a long and rich history, with some of the most classical results in this area dating back to Cayley \cite{Cayley 1} in the \nth{19} century. Since then embeddings of metric spaces have been the subject of much study with classical results in this area given by Menger \cite{Menger 1, Menger 2}, Schoenberg \cite{Schoenberg 1, Schoenberg 2}, Enflo \cite{Enflo 1, Enflo 2} and Gromov \cite{Gromov 1} (to name a few). One main theme that can be seen throughout their work is that many difficult problems in mathematical analysis and otherwise can be solved quickly and easily by constructing a suitable embedding of a metric space into a more tractable host space. Because of this the theory of metric embeddings has a plethora of applications in areas such as functional analysis, evolutionary biology, combinatorial optimisation and theoretical computer science. Specific examples include the Sparsest Cut Problem in combinatorial optimisation \cite{Arora 1, Vazirani 1} and the study of phylogenetic trees in evolutionary biology \cite{Weber 1, Ailon 1, Semple 1}.
	
	In this paper we will study isometric and bi-lipschitz embeddings of metric spaces through the theory of $p$-negative type. The concept of $p$-negative type was originally developed by Schoenberg \cite{Schoenberg 2} to study isometric embeddings of powers of metric spaces into Hilbert space. Recently $p$-negative type has been the topic of much research in areas such as mathematical analysis and theoretical computer science (see \cite{Doust 1, Doust 2, Kelleher 1, Li 1, Faver 1} for a list of some of the applications of $p$-negative type to the isometric theory of metric spaces). The definition of $p$-negative type is as follows (note that the definition of strict $p$-negative type is due to Li and Weston in \cite{Li 1}).
	
	\begin{definition}\label{Negative Type Def}
		Let $(X,d_{X})$ be a semi-metric space\footnote{Here we use the term semi-metric space to mean a `metric' space except that we drop the requirement that the triangle inequality holds. All results in this paper will hold for semi-metric spaces since they do not require the use of the triangle inequality anywhere in their proofs.} and $0\leq p<\infty$. Then $X$ is said to have \textit{$p$-negative type} if
		\[ \sum_{i,j=1}^{n}d_{X}(x_{i},x_{j})^{p}\xi_{i}\xi_{j}\leq 0
		\]
		for all distinct $x_{1},\dots,x_{n}\in X$, $\xi_{1},\dots,\xi_{n}\in\R$ with $\sum_{i=1}^{n}\xi_{i}=0$ and $n\geq 2$. Furthermore, $X$ is said to have strict $p$-negative type if equality holds only for $\xi_{1}=\dots=\xi_{n}=0$.
	\end{definition}
	
	Central to the theory of $p$-negative type and its applications are the two following classical results of Schoenberg \cite{Schoenberg 1, Schoenberg 2} (note that the first statement below provides an isometric characterisation of subsets of Hilbert space).
	
	\begin{theorem}\label{Basic Negative Type Results}
		Let $(X,d_{X})$ be a semi-metric space and $0\leq p<\infty$.
		\begin{enumerate}
			\item $X$ has $p$-negative type if and only if $(X,d_{X}^{p/2})$ isometrically embeds in some Hilbert space.
			\item If $0\leq q<p$ and $X$ has $p$-negative type then $X$ also has strict $q$-negative type.
		\end{enumerate}
	\end{theorem}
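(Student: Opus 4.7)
For part (1), the plan is to prove both directions in the classical Schoenberg style. The easy direction: if $\phi\colon (X, d_X^{p/2}) \to H$ is an isometric embedding into a Hilbert space, then for distinct $x_1,\dots,x_n \in X$ and real $\xi_i$ with $\sum \xi_i = 0$, a direct expansion gives
\[
\sum_{i,j=1}^n d_X(x_i,x_j)^p \xi_i \xi_j \;=\; \sum_{i,j=1}^n \|\phi(x_i)-\phi(x_j)\|^2 \xi_i \xi_j \;=\; -2\,\Bigl\| \sum_{i=1}^n \xi_i \phi(x_i)\Bigr\|^2 \;\le\; 0,
\]
the diagonal norm terms $\|\phi(x_i)\|^2 + \|\phi(x_j)\|^2$ dropping out because $\sum \xi_i = 0$. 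For the converse, I would fix a basepoint $x_0 \in X$ and define the kernel $K(x,y) := \tfrac12\bigl(d_X(x,x_0)^p + d_X(y,x_0)^p - d_X(x,y)^p\bigr)$. Applying the $p$-negative type hypothesis to the augmented family $(x_0, x_1,\dots,x_n)$ with weights $(-\sum_i c_i, c_1,\dots,c_n)$ and rearranging shows that $K$ is positive semi-definite on $X$. The embedding is then read off the reproducing-kernel Hilbert space of $K$, for which $\|\phi(x)-\phi(y)\|^2 = K(x,x) + K(y,y) - 2K(x,y) = d_X(x,y)^p$.

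For part (2), part (1) supplies an isometric embedding $\phi\colon (X, d_X^{p/2}) \to H$, so $d_X(x,y)^q = \|\phi(x) - \phi(y)\|^r$ with $r := 2q/p \in [0,2)$. It therefore suffices to show that any Hilbert space has strict $r$-negative type for every $0 \le r < 2$. For $r=0$ this is a one-line calculation giving $\sum_{i \ne j}\xi_i\xi_j = -\sum_i \xi_i^2$, which is strictly negative unless $\xi=0$. For $0 < r < 2$, I would use the subordination identity
\[
\|v\|^r \;=\; c_r \int_0^\infty \bigl(1 - e^{-s\|v\|^2}\bigr)\, s^{-1-r/2}\, ds,
\]
with $c_r > 0$. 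Substituting into the negative-type sum and invoking $\sum \xi_i = 0$ converts it into
\[
\sum_{i,j} \|\phi(x_i)-\phi(x_j)\|^r \xi_i \xi_j \;=\; -c_r \int_0^\infty \Bigl(\sum_{i,j} e^{-s\|\phi(x_i)-\phi(x_j)\|^2} \xi_i \xi_j\Bigr)\, s^{-1-r/2}\, ds.
\]
Since the Gaussian kernel $(u,v) \mapsto e^{-s\|u-v\|^2}$ is strictly positive definite on $H$ for every $s>0$, the inner quadratic form is strictly positive whenever $\xi \ne 0$ and the $\phi(x_i)$ are distinct (which follows from distinctness of the $x_i$ and the isometric character of $\phi$). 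The integral is therefore strictly negative, yielding strict $r$-negative type and hence strict $q$-negative type of $X$.

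The main obstacle I anticipate is the passage from finite subsets to all of $X$ in the converse of part (1): once positive semi-definiteness of $K$ is established, producing a single embedding of the entire space requires a genuine functional-analytic construction (reproducing-kernel Hilbert space, completion of a quotient of the free real vector space on $X$, or a direct limit of finite-dimensional Gram embeddings). The subordination formula in part (2) is classical (a special case of the Bernstein/Lévy representation of $t^{r/2}$ as the Laplace transform of a positive measure), and strict positive definiteness of the Gaussian kernel follows from positivity of its Fourier transform; these are the technical inputs I would cite rather than reprove.
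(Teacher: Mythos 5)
The paper does not prove this theorem at all: it is quoted as a classical result of Schoenberg (with the strictness refinement in part (2) going back to Li and Weston), and the citations to \cite{Schoenberg 1, Schoenberg 2} stand in for a proof. Your argument is therefore not comparable to anything in the paper, but it is a correct and essentially complete reconstruction of the standard proofs. For part (1) you use Schoenberg's kernel trick: the forward direction by expanding $\|\phi(x_i)-\phi(x_j)\|^2$ and killing the diagonal terms with $\sum_i\xi_i=0$, and the converse by showing the basepoint kernel $K$ is positive semi-definite and realising $X$ inside the associated reproducing-kernel Hilbert space; the identity $K(x,x)+K(y,y)-2K(x,y)=d_X(x,y)^p$ checks out, and you correctly flag that passing from finite Gram matrices to a single global embedding is the only genuinely functional-analytic step. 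One small technicality you should make explicit: when you test $p$-negative type on the augmented family $(x_0,x_1,\dots,x_n)$, the definition requires \emph{distinct} points, so if $x_0$ coincides with some $x_k$ you must merge the weights $-\sum_i c_i$ and $c_k$ before applying the hypothesis; this is routine but needed for the argument to be literal. For part (2) your route --- reduce to strict $r$-negative type of Hilbert space for $r=2q/p\in[0,2)$ via the Bernstein/L\'evy subordination formula and strict positive definiteness of the Gaussian kernel on the (finite-dimensional) span of the $\phi(x_i)$ --- is exactly the classical mechanism, and the strictness comes out cleanly because distinctness of the $x_i$ in a semi-metric space forces distinctness of the $\phi(x_i)$. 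What your write-up buys over the paper's bare citation is a self-contained proof; what it costs is reliance on two imported facts (the subordination identity and strict positive definiteness of the Gaussian), both of which are standard and appropriately cited rather than reproved.
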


	Note that the second statement in the above theorem implies that all of the values of $p$-negative type that a semi-metric space $(X,d_{X})$ possesses are encoded in the largest value $s\geq 0$ such that $X$ has $s$-negative type. Thus one is led to consider the quantity
	\[ \wp_{X}=\sup\{p\geq 0:X\text{ has }p\text{-negative type}\}
	\]
	which is referred to as the supremal $p$-negative type of $X$. Note that since all the sums that appear in the definition of $p$-negative type are finite sums a simple limiting argument shows that if $\wp_{X}<\infty$ then $X$ also has $\wp_{X}$-negative type. Consequently the set of values $s\geq0$ such that a semi-metric space $(X,d_{X})$ has $s$-negative type is always of the form $[0,\wp_{X}]$ or $[0,\infty)$.
	
	It follows from the work of Schoenberg \cite{Schoenberg 1, Schoenberg 2} that if $0<p\leq 2$ then $X=L^{p}$ has $\wp_{X}=p$. In fact, a celebrated theorem of Bretagnolle, Dacunha-Castelle and Krivine \cite{Bretagnolle 1} provides a partial converse to this result, which characterises linear subspaces of $L^{p}$ ($1\leq p\leq 2$) up to linear isometry. Namely, if $1\leq p\leq 2$ then a real normed space $X$ is linearly isometric to a subspace of some $L^{p}$ space if and only if $X$ has $p$-negative type. The case when $2<p<\infty$ is radically different. It follows from Koldobsky \cite{Koldobsky 1} that if $n\geq 3$ and $2<p<\infty$ then $X=\ell^{p}_{n}$ has $\wp_{X}=0$, and also that $Y=\ell^{p}_{2}$ has $\wp_{Y}=1$. The variation for $n=2$ can be explained by the well-known result that every $2$-dimensional normed space is linearly isometric to a subspace of $L_{1}$ (see \cite{Yost 1}). Results on non-commutative $L^{p}$ spaces have also been obtained by Dahma and Lennard \cite{Dahma 1}. Here they showed that if $X=S_{p}$ is the Schatten $p$-trace class then $\wp_{X}=2$ when $p=2$ and $\wp_{X}=0$ if $0<p<\infty$ and $p\neq 2$.
	
	More recently, Linial, London and Rabinovich \cite{Linial 2} have provided a similar characterisation of those finite metric spaces that admit a bi-lipschitz embedding into some Hilbert space with a given level of distortion. Bi-lipschitz embeddings of metric spaces, and finite metric spaces in particular, have been studied extensively over the last half-century or so. Originally such embeddings were studied from the perspective of geometric analysis and Banach space theory. However, more recently the theory of bi-lipschitz embeddings has been examined from the perspective of combinatorial optimisation and theoretical computer science. This is in part due to Linial, London and Rabinovich \cite{Linial 2} who showed that many problems in optimisation could be solved efficiently by considering bi-lipschitz embeddings of certain metric spaces into larger host spaces, such as Hilbert or Banach spaces.
	
	Let us now recall the definition of a bi-lipschitz embedding.
	
	\begin{definition}\label{Bilipschitz Definition}
		Let $(X,d_{X})$ and $(Y,d_{Y})$ be semi-metric spaces and $1\leq C<\infty$.
		\begin{enumerate}
			\item A map $f:X\rightarrow Y$ is said to have \textit{distortion at most $C$} if there exists a (scaling constant) $s>0$ such that
			\[ sd_{X}(x,y)\leq d_{Y}(f(x),f(y))\leq sCd_{X}(x,y)
			\]
			for all $x,y\in X$. The smallest such $C$ for which this holds is denoted by $\dist(f)$ (and if no such $C$ exists we set $\dist(f)=\infty$). If $\dist(f)<\infty$ then we say that $f$ is a bi-lipschitz embedding.
			\item We denote by $c_{(Y,d_{Y})}(X,d_{X})$ (or simply by $c_{Y}(X)$) the infimum of all constants $1\leq C\leq\infty$ such that there exists a map $f:X\rightarrow Y$ with $\dist(f)\leq C$ (where again we allow the possibility that $c_{(Y,d_{Y})}(X,d_{X})=\infty$). When $Y=\ell^{2}$, we denote $c_{Y}(X)$ simply by $c_{2}(X)$.
		\end{enumerate}
	\end{definition}

	\begin{remark}
		For our purposes, it is useful to say that a map $f:(X,d_{X})\rightarrow(Y,d_{Y})$ is an isometry if there exists some $s>0$ such that $d_{Y}(f(x),f(y))=sd_{X}(x,y)$ for all $x,y\in X$. Note that this is slightly more general than the usual definition of isometries in the literature. However, one may note that the defintion of (strict) $p$-negative type is invariant under this more general definition of isometry too. Moreover, with this definition one has that $\dist(f)=1$ if and only if $f$ is an isometry. In this way we may think of bi-lipschitz embeddings as a generalisation of isometries.
	\end{remark}
	
	One of the main problems that one faces when dealing with bi-lipschitz embeddings of semi-metric spaces is how one may embed the given space into the larger space with as little distortion as possible. The most classical result along these lines is perhaps a result of Bourgain \cite{Bourgain 1}, which states that any $n$ point metric space\footnote{The use of the word `metric' is necessary here, since the result fails to hold in general for $n$ point semi-metric spaces.} admits a bi-lipschitz embedding into $\R^{n}$ with distortion at most $O(\log n)$. In \cite{Linial 2} Linial, London and Rabinovich provided an algorithmic proof of this result and were able to show that this bound is attained for constant degree expander graphs (and hence cannot be improved in general).
	
	To state one of the main results of \cite{Linial 2} we need to introduce a certain class of matrices that will frequently appear throughout this paper. We will use $M_{n}(\R)$ to denote the space of all real-valued $n\times n$ matrices. Using $A^{T}$ to denote the usual matrix transpose of $A$ and $\langle\cdot,\cdot\rangle$ to denote the standard inner product on $\R^{n}$, we then set
	\[ M_{n}^{+}(\R)=\{A\in M_{n}(\R):A^{T}=A\mbox{ and }\langle A\xi,\xi\rangle\geq 0,\forall \xi\in\R^{n}\}.
	\]
	That is, $M_{n}^{+}(\R)$ is simply the set of all positive semidefinite matrices in $M_{n}(\R)$. We will also use $\1$ to denote the vector in $\R^{m}$ whose entries are all $1$, where $m$ may depend on the context. Finally, we put
	\[ \mathcal{O}_{n}(\R)=\{Q\in M_{n}^{+}(\R):Q\1=0\}.
	\]
	
	The result we shall be using from \cite{Linial 2} is the following one.
	
	\begin{theorem}\label{Linial London Rabinovich}
		Let $(X,d_{X})=(\{x_{1},\dots,x_{n}\},d_{X})$ be a finite semi-metric space and $C\geq 1$. Then $X$ admits a bi-lipschitz embedding into $\R^{n}$ with distortion at most $C$ if and only if
		\[ \sum_{q_{ij}>0}d_{X}(x_{i},x_{j})^{2}q_{ij}+C^{2}\sum_{q_{ij}<0}d_{X}(x_{i},x_{j})^{2}q_{ij}\leq 0
		\]
		for all $Q=(q_{ij})_{i,j=1}^{n}\in\mathcal{O}_{n}(\R)$.
	\end{theorem}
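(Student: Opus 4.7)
I would prove this via finite-dimensional semidefinite duality, or equivalently Hahn-Banach separation on a closed convex cone. The straightforward direction ($\Rightarrow$) is a direct computation. Given $v_i := f(x_i) \in \R^n$ satisfying the bi-lipschitz inequalities with scaling $s$ and $Q = (q_{ij}) \in \mathcal{O}_{n}(\R)$, the identity $\sum_{i,j} q_{ij}\|v_i - v_j\|^2 = -2\,\mathrm{tr}(QG)$, with $G = (\langle v_i, v_j\rangle)$, uses only $Q\1 = 0$, and is $\leq 0$ because both $Q$ and $G$ are PSD. Splitting the left-hand side by the sign of $q_{ij}$ and applying the favourable bi-lipschitz bound in each summand yields the claimed inequality after dividing by $s^{2}$.

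For the converse I would argue by contrapositive. First, observe that the embedding exists if and only if there is some $H \in M_{n}^{+}(\R)$ whose squared-distance matrix $L_{ij}(H) := (e_i - e_j)^T H (e_i - e_j)$ obeys $d_X(x_i, x_j)^2 \leq L_{ij}(H) \leq C^2 d_X(x_i, x_j)^2$ for all $i \neq j$ (a Cholesky factorisation $H = V^T V$ then recovers the vectors $v_i$). Assume no such $H$ exists. The set $\mathcal{C} := \{L(H) : H \in M_{n}^{+}(\R)\} \subset \R^{\binom{n}{2}}$ is a closed convex cone; closedness follows from Schoenberg's theorem, since $\mathcal{C}$ coincides with the set of symmetric zero-diagonal matrices $D$ satisfying $\xi^T D \xi \leq 0$ for all $\xi \perp \1$. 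By hypothesis $\mathcal{C}$ misses the compact box $\mathcal{B} := \prod_{i<j}[d_X(x_i,x_j)^2, C^2 d_X(x_i,x_j)^2]$, so Hahn-Banach provides scalars $(c_{ij})_{i<j}$ with $\sup_{\mathcal{C}} \sum c_{ij} D_{ij} < \inf_{\mathcal{B}} \sum c_{ij} D_{ij}$.

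Since $\mathcal{C}$ is a cone, the supremum is $0$ or $+\infty$; finiteness forces $\sum_{i<j} c_{ij}(e_i - e_j)(e_i - e_j)^T \preceq 0$, so $Q := -\sum_{i<j} c_{ij}(e_i - e_j)(e_i - e_j)^T$ is PSD. Because each $(e_i - e_j) \perp \1$, we obtain $Q\1 = 0$; and expanding the rank-one matrices shows $Q_{ij} = c_{ij}$ for $i \neq j$ (with the diagonal determined by the identity $Q\1 = 0$), so $Q \in \mathcal{O}_{n}(\R)$. The infimum over $\mathcal{B}$ is attained at the lower endpoint of each interval when $c_{ij} > 0$ and at the upper endpoint when $c_{ij} < 0$, so (after doubling for symmetry and using $d_X(x_i,x_i) = 0$) the strict separation translates precisely to a strict violation of the hypothesised inequality for this $Q$, a contradiction. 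The principal obstacle I anticipate is the bookkeeping in this last step: verifying closedness of $\mathcal{C}$ cleanly via Schoenberg, correctly identifying the off-diagonal entries of $Q$ from the rank-one matrices $(e_i-e_j)(e_i-e_j)^T$, and matching the box-optimisation formula to the split sum in the theorem, keeping signs and symmetries consistent throughout.
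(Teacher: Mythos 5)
The paper itself offers no proof of this theorem --- it is quoted directly from Linial, London and Rabinovich \cite{Linial 2} --- so your argument can only be judged on its own terms, and it is correct; it is essentially the semidefinite-duality proof of the original source. The forward direction is the standard computation ($\sum_{i,j}q_{ij}\|v_i-v_j\|^2=-2\operatorname{tr}(QG)\leq 0$ using $Q\1=0$ and positivity of $Q$ and $G$, then estimating each signed block and dividing by $s^2$). For the converse, the points you flagged as delicate all check out: the cone $\mathcal{C}$ is closed because it coincides with the set of symmetric zero-diagonal $D$ with $\xi^TD\xi\leq 0$ for $\xi\perp\1$ (one recovers $H=-\tfrac12 JDJ\succeq 0$ with $L(H)=D$, where $J=I-\tfrac1n\1\1^T$, and this set is an intersection of closed half-spaces); strict separation from the compact box forces $\sup_{\mathcal{C}}=0$, so $\operatorname{tr}(HM)\leq 0$ for all $H\succeq 0$ with $M=\sum_{i<j}c_{ij}(e_i-e_j)(e_i-e_j)^T$, and self-duality of the positive semidefinite cone gives $Q=-M\succeq 0$ with $Q\1=0$ and $Q_{ij}=c_{ij}$ off the diagonal; finally the coordinatewise box infimum, doubled by symmetry, is exactly the split sum in the statement (diagonal terms vanish since $d_X(x_i,x_i)=0$), so its positivity contradicts the hypothesised inequality. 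The only cosmetic omission is the normalisation $s=1$ before passing to the Gram-matrix formulation, which is immediate by rescaling $f$.
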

	
	It is worth remarking at this time that a simple consequence of the above theorem is the following expression for the Euclidean distortion of a finite semi-metric space.
	
	\begin{theorem}
		Let $(X,d_{X})=(\{x_{1},\dots,x_{n}\},d_{X})$ be a finite semi-metric space and $C\geq 1$. Then
		\[ c_{2}(X,d_{X})^{2}=\max\bigg\{\frac{\sum_{q_{ij}>0}d_{X}(x_{i},x_{j})^{2}q_{ij}}{-\sum_{q_{ij}<0}d_{X}(x_{i},x_{j})^{2}q_{ij}}:Q\in\mathcal{O}_{n}(\R), Q\neq 0\bigg\}.
		\]
	\end{theorem}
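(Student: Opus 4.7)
The plan is to read off the expression for $c_{2}(X,d_{X})^{2}$ directly from Theorem \ref{Linial London Rabinovich} by isolating the $C^{2}$ factor. Writing
\[ N(Q)=\sum_{q_{ij}>0}d_{X}(x_{i},x_{j})^{2}q_{ij},\qquad D(Q)=-\sum_{q_{ij}<0}d_{X}(x_{i},x_{j})^{2}q_{ij},
\]
the inequality appearing in Theorem \ref{Linial London Rabinovich} is precisely $N(Q)\leq C^{2}D(Q)$. Since $c_{2}(X,d_{X})$ is by definition the infimum of all $C\geq 1$ for which $X$ admits a bi-lipschitz embedding into $\ell^{2}$ of distortion at most $C$, the goal is to show this infimum squared equals $\sup_{Q\neq 0}N(Q)/D(Q)$, and that the supremum is attained.

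The first main step is to rule out the degenerate case $D(Q)=0$. Since each summand in $D(Q)$ is nonnegative, $D(Q)=0$ forces $q_{ij}=0$ whenever $i\neq j$ (we use here that the points $x_{1},\dots,x_{n}$ are distinct, so $d_{X}(x_{i},x_{j})>0$ for $i\neq j$). Combined with $Q\1=0$ this gives $q_{ii}=0$ for every $i$, and a standard $2\times 2$ principal minor argument for positive semidefinite matrices then forces $Q=0$. Thus for every $Q\in\mathcal{O}_{n}(\R)\setminus\{0\}$ we have $D(Q)>0$, and the condition in Theorem \ref{Linial London Rabinovich} is equivalent to
\[ C^{2}\geq\frac{N(Q)}{D(Q)}\qquad\text{for every }Q\in\mathcal{O}_{n}(\R)\setminus\{0\}.
\]

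Taking the infimum over admissible $C$ on the left, Theorem \ref{Linial London Rabinovich} yields $c_{2}(X,d_{X})^{2}=\sup_{Q\neq 0}N(Q)/D(Q)$. The final step is to upgrade this supremum to a maximum. Since both $N$ and $D$ are homogeneous of the same degree in $Q$, the ratio is scale-invariant, so one can restrict to the compact set $\{Q\in\mathcal{O}_{n}(\R):\|Q\|_{F}=1\}$. On this set the denominator is continuous and, by the argument above, strictly positive, so the ratio is a continuous function on a compact set and attains its maximum.

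The only real subtlety is the argument that $D(Q)>0$ whenever $Q\neq 0$; once that is in place, the theorem is just a rearrangement of Theorem \ref{Linial London Rabinovich} combined with a routine compactness argument.
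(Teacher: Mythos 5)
The paper offers no proof of this theorem at all---it is presented as ``a simple consequence'' of Theorem \ref{Linial London Rabinovich}---so your write-up is essentially supplying the intended derivation: rearrange the inequality of that theorem into $N(Q)\leq C^{2}D(Q)$, check the denominator cannot vanish for $Q\neq 0$, and use homogeneity plus compactness to turn the supremum into a maximum. That is surely the route the author had in mind. One small slip: $D(Q)=0$ does \emph{not} by itself force $q_{ij}=0$ for $i\neq j$; it only rules out strictly negative off-diagonal entries, i.e.\ it gives $q_{ij}\geq 0$ for $i\neq j$. The correct finish is to combine this with $q_{ii}\geq 0$ (positive semidefiniteness) and the fact that each row of $Q$ sums to zero: a vanishing sum of nonnegative terms is identically zero, hence $Q=0$. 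No $2\times 2$ principal minor argument is needed, and the intermediate claim as you state it is not justified, though the conclusion $D(Q)>0$ for all nonzero $Q\in\mathcal{O}_{n}(\R)$ is correct.

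The genuine gap is the sentence ``taking the infimum over admissible $C$ on the left yields $c_{2}(X,d_{X})^{2}=\sup_{Q\neq 0}N(Q)/D(Q)$.'' The admissible constants are restricted to $C\geq 1$, so what Theorem \ref{Linial London Rabinovich} actually gives is that $C\geq 1$ is admissible if and only if $C^{2}\geq S$, where $S=\sup_{Q\neq 0}N(Q)/D(Q)$; hence $c_{2}(X,d_{X})^{2}=\max(1,S)$, and $S$ can genuinely be smaller than $1$. For a two-point space every nonzero $Q\in\mathcal{O}_{2}(\R)$ is a positive multiple of the matrix with diagonal entries $1$ and off-diagonal entries $-1$, so $N(Q)=0$ and $S=0$, while $c_{2}=1$. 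More generally, any finite space of strict $2$-negative type (e.g.\ the equilateral triangle $K_{3}$) satisfies $N(Q)<D(Q)$ for every $Q\neq 0$, so by your own compactness argument $S<1$, yet $c_{2}=1$. In other words the identity as printed (and likewise Corollary \ref{Euclidean Distortion Formula}) needs a $\max(1,\cdot)$ on the right-hand side, and your proof glides over precisely the point where that correction enters. Everything else, including the attainment of the supremum on the compact slice $\{Q\in\mathcal{O}_{n}(\R):\|Q\|_{F}=1\}$, is fine.
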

	
	Our starting point for constructing an analogue of $p$-negative type that is compatible with bi-lipschitz embeddings is Theorem \ref{Linial London Rabinovich}. Indeed, we are now in a position to provide our definition of distorted $p$-negative type, which is the main object of study in this article.
	
	\begin{definition}\label{Distorted pneg type def}
		Let $(X,d_{X})$ be a semi-metric space, $0\leq p<\infty$ and $1\leq C<\infty$. Then $X$ is said to have $p$-negative type with distortion $C$ if
		\[ \sum_{q_{ij}>0}d_{X}(x_{i},x_{j})^{p}q_{ij}+C^{2}\sum_{q_{ij}<0}d_{X}(x_{i},x_{j})^{p}q_{ij}\leq 0
		\]
		for all distinct $x_{1},\dots,x_{n}\in X$, $Q=(q_{ij})_{i,j=1}^{n}\in\mathcal{O}_{n}(\R)$ and $n\geq 2$. Furthermore, $X$ is said to have strict $p$-negative type with distortion $C$ if equality holds only for $Q=0$.
	\end{definition}
	
	In Section \ref{Preliminary Results} we prove that with the above definition of distorted $p$-negative type Theorem \ref{Linial London Rabinovich} allows us to mimic the isometric embedding property of the usual $p$-negative type in the bi-lipschitz setting. That is, a semi-metric space $(X,d_{X})$ has $p$-negative type with distortion $C$ if and only if $(X,d_{X}^{p/2})$ embeds in a Hilbert space with distortion at most $C$. We also show that distorted $p$-negative type is really a generalisation of the usual $p$-negative type. That is, for any $0\leq p<\infty$, a semi-metric space $(X,d_{X})$ has $p$-negative type with distortion $C=1$ if and only if it has $p$-negative type (in the sense of Definition \ref{Negative Type Def}). We also provide an analogue of the nesting result for the usual $p$-negative type (see the second statement in Theorem \ref{Basic Negative Type Results} above) for distorted $p$-negative type.
	
	In Section \ref{Strictness etc} we study the concepts of strict distorted $p$-negative type through the lens of polygonal equalities. The usual definition of a polygonal equality is due to Li and Weston in \cite{Li 1}, where they define a $p$-polygonal equality to be an equality of the form
		\[ \sum_{i,j=1}^{n}d_{X}(x_{i},x_{j})^{p}\xi_{i}\xi_{j}=0
		\]
	for some $n\geq2$, distinct $x_{1},\dots,x_{n}\in X$ and $\xi_{1},\dots,\xi_{n}\in\R$ with $\sum_{i=1}^{n}\xi_{i}=0$.
	
	It was proved in \cite{Li 1} that it is possible for an infinite semi-metric space $X$ to have either strict $\wp_{X}$-negative type or nonstrict $\wp_{X}$-negative type. However, if $X$ is a finite semi-metric space it was also proved in \cite{Li 1} that $X$ must have nonstrict $\wp_{X}$-negative type (provided that $\wp_{X}<\infty$). That is, when $|X|<\infty$ and $\wp_{X}<\infty$, $X$ always admits a non-trivial $\wp_{X}$-polygonal equality. This also means that if $X$ is a finite semi-metric space then $X$ has strict $p$-negative type if and only if $p<\wp_{X}$.
	
	In light of Definition \ref{Distorted pneg type def}, in the distorted setting we must define our polygonal equalities slightly differently. Here we will define a $p$-polygonal equality with distortion $C$ to be an equality of the form
		\[ \sum_{q_{ij}>0}d_{X}(x_{i},x_{j})^{p}q_{ij}+C^{2}\sum_{q_{ij}<0}d_{X}(x_{i},x_{j})^{p}q_{ij}=0
		\]
	for some distinct $x_{1},\dots,x_{n}\in X$, $Q=(q_{ij})_{i,j=1}^{n}\in\mathcal{O}_{n}(\R)$ and $n\geq 2$. The connection between our new definition of polygonal equalities and the definition due to Li and Weston is explained below in Section \ref{Strictness etc}. It suffices to say here that in Section \ref{Strictness etc} we prove that for all $p\geq\wp_{X}$, a finite metric space admits a non-trivial $p$-polygonal equality with distortion $c_{2}(X,d_{X})^{p/2}$ (see Corollary \ref{Existence Of Poly Eq}). Using this we are able to conclude that a finite semi-metric space $X$ has strict $p$-negative type with distortion $C$ if and only if $p<\wp_{X}$ or $C>c_{2}(X,d_{X}^{p/2})$ (see Corollary \ref{Distorted Strict Characterisation}).
	
	Finally, in Section \ref{Examples} we provide explicit examples of optimal distortion Euclidean embeddings of powers of the bipartite graph $K_{m,n}$ and the Hamming cube $H_{n}$. In doing so we are also able to provide explicit examples of distorted polygonal equalities for these spaces as well as determine for which values of $p$ and $C$ these spaces have (strict) $p$-negative type with distortion $C$.
	
	\section{Distorted $p$-negative Type}\label{Preliminary Results}
	
	Our first point of call is to show that when $C=1$, the definition of (strict) $p$-negative type with distortion $C$ coincides with usual definition of (strict) $p$-negative type (see Definition \ref{Negative Type Def}).
	
	\begin{proposition}\label{C=1 Equivalence}
		Let $(X,d_{X})$ be a metric space and $p\geq 0$. Then the following are true.
		\begin{enumerate}
			\item $X$ has $p$-negative type if and only if $X$ has $p$-negative type with distortion $1$.
			\item $X$ has strict $p$-negative type if and only if $X$ has strict $p$-negative type with distortion $1$.
		\end{enumerate}
	\end{proposition}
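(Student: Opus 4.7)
The key observation is that when $C=1$, the distorted $p$-negative type inequality collapses to
\[ \sum_{i,j=1}^{n} d_{X}(x_{i},x_{j})^{p}q_{ij}\leq 0 \]
for every $Q=(q_{ij})\in\mathcal{O}_{n}(\R)$, since the signs in front of the positive and negative entries of $Q$ agree. So the goal is to show that this quantification over $\mathcal{O}_{n}(\R)$ is equivalent to the scalar inequality in Definition \ref{Negative Type Def}.

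For the implication from distorted to classical, the plan is to fix $\xi=(\xi_{1},\dots,\xi_{n})$ with $\sum_{i}\xi_{i}=0$ and feed the rank-one matrix $Q=\xi\xi^{T}$ into the distorted condition. This $Q$ is symmetric, positive semidefinite, and satisfies $Q\1=\xi(\xi^{T}\1)=0$, so $Q\in\mathcal{O}_{n}(\R)$, and the distorted inequality then reads exactly $\sum_{i,j}d_{X}(x_{i},x_{j})^{p}\xi_{i}\xi_{j}\leq 0$. The same argument immediately handles strictness in this direction: if $X$ has strict $p$-negative type with distortion $1$ and a non-trivial $\xi$ gives equality, then $Q=\xi\xi^{T}\neq 0$ witnesses a failure of strictness in the distorted sense.

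For the reverse implication, the main tool is the spectral theorem. Given $Q\in\mathcal{O}_{n}(\R)$, write $Q=\sum_{k=1}^{n}\lambda_{k}v_{k}v_{k}^{T}$ with $\lambda_{k}\geq 0$ and $\{v_{k}\}$ an orthonormal eigenbasis. The constraint $Q\1=0$ forces $\sum_{i}(v_{k})_{i}=0$ whenever $\lambda_{k}>0$, so each such $v_{k}$ is an admissible test vector for classical $p$-negative type. Then
\[ \sum_{i,j=1}^{n}d_{X}(x_{i},x_{j})^{p}q_{ij}=\sum_{k=1}^{n}\lambda_{k}\sum_{i,j=1}^{n}d_{X}(x_{i},x_{j})^{p}(v_{k})_{i}(v_{k})_{j}\leq 0, \]
each inner sum being nonpositive by hypothesis. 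For strictness, if the left-hand side vanishes, every term with $\lambda_{k}>0$ must satisfy the classical equality, which under strict $p$-negative type forces $v_{k}=0$, contradicting the fact that $v_{k}$ is a unit vector; hence all $\lambda_{k}=0$ and $Q=0$.

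I do not anticipate any real obstacle here beyond correctly recording the spectral decomposition argument and checking that the rank-one test matrices $\xi\xi^{T}$ lie in $\mathcal{O}_{n}(\R)$ precisely when $\sum_{i}\xi_{i}=0$. The proof is essentially a reformulation of the elementary equivalence between quadratic-form inequalities on the hyperplane $\{\xi:\sum_{i}\xi_{i}=0\}$ and positive-semidefiniteness-type inequalities against matrices annihilating $\1$, with strictness preserved in both directions by the same decomposition.
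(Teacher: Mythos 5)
Your proposal is correct and follows essentially the same route as the paper: rank-one test matrices $\xi\xi^{T}$ for the direction from distorted to classical type, and a decomposition of $Q\in\mathcal{O}_{n}(\R)$ into rank-one positive semidefinite summands annihilating $\1$ (the paper states this decomposition directly; you derive it via the spectral theorem, which is the same thing) for the converse. The strictness bookkeeping in both directions also matches the paper's argument.
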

	\begin{proof}
		We will prove only the second statement, since the proof of the first statement is more or less identical. First suppose that $X$ has strict $p$-negative type with distortion $1$. By definition, this means that
		\[ \sum_{i,j=1}^{n}d_{X}(x_{i},x_{j})^{p}q_{ij}<0
		\]
		for all distinct $x_{1},\dots,x_{n}\in X$, all nonzero $Q=(q_{ij})_{i,j=1}^{n}\in\mathcal{O}_{n}(\R)$ and $n\geq 2$. Now, suppose that $n\geq 2$, $x_{1},\dots,x_{n}\in X$ are distinct and that $\xi_{1},\dots,\xi_{n}\in\R$ (not all zero) with $\sum_{i=1}^{n}\xi_{i}=0$. Setting $Q=(\xi_{i}\xi_{j})_{i,j=1}^{n}$ it is a simple matter to check that $Q\neq 0$ and  $Q\in\mathcal{O}_{n}(R)$. Hence, applying the above inequality to this particular choice of $Q$ gives that
		\[ \sum_{i,j=1}^{n}d_{X}(x_{i},x_{j})^{p}\xi_{i}\xi_{j}=\sum_{i,j=1}^{n}d_{X}(x_{i},x_{j})^{p}q_{ij}<0
		\]
		which shows that $X$ has strict $p$-negative type. Conversely, suppose that $X$ has strict $p$-negative type. Take distinct $x_{1},\dots,x_{n}\in X$ and  a nonzero $Q=(q_{ij})_{i,j=1}^{n}\in\mathcal{O}_{n}(\R)$ with $\rank(Q)=r$. Since $Q$ is positive semi-definite with $Q\1=0$, we may write it as $Q=\sum_{k=1}^{r}R_{k}$ where each $R_{k}$ is positive semi-definite with $\rank(R_{k})=1$ and $R_{k}\1=0$. By basic linear algebra, for each $1\leq k\leq r$ we can find $\xi_{1}^{(k)},\dots,\xi_{n}^{(k)}\in\R$ (not all zero) with $\sum_{i=1}^{n}\xi_{i}^{(k)}=0$ such that $R_{k}=(\xi_{i}^{(k)}\xi_{j}^{(k)})_{i,j=1}^{n}$. Putting this all togther, we have that
		\[ q_{ij}=\sum_{k=1}^{r}\xi_{i}^{(k)}\xi_{j}^{(k)}
		\]
		for all $1\leq i,j\leq n$. Hence
		\[ \sum_{i,j=1}^{n}d_{X}(x_{i},x_{j})^{p}q_{ij}=\sum_{k=1}^{r}\sum_{i,j=1}^{n}d_{X}(x_{i},x_{j})^{p}\xi_{i}^{(k)}\xi_{j}^{(k)}<0
		\]
		which shows that $X$ has strict $p$-negative type with distortion $1$.
	\end{proof}
	
	Next we remark that the bi-lipschitz embedding theorem of Linial, London and Rabinovich \cite{Linial 2} can easily be extended to deal with infinite metric spaces also. This is due to the following classical result which states that Hilbertian distortion is finitely determined. A proof of the following proposition can be found in \cite{Wells 1}, for example.
	
	\begin{proposition}\label{Local Property}
		Let $(X,d_{X})$ be a semi-metric space and $1\leq C<\infty$. Then $(X,d_{X})$ embeds in a Hilbert space with distortion at most $C$ if and only if every finite subset $Y\subseteq X$ embeds in $\ell^{2}$ with distortion at most $C$.
	\end{proposition}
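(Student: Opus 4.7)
The forward direction is immediate: the restriction to a subset $Y \subseteq X$ of any map $f : X \to H$ into a Hilbert space still has distortion at most $\dist(f)$. The content is in the converse, which I would prove by assembling the finite embeddings into a single embedding of $X$ through a Banach-space ultraproduct.

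Fix a base point $x_{0} \in X$ and let $\mathcal{F}$ denote the collection of finite subsets of $X$ containing $x_{0}$, directed by inclusion. For each $F \in \mathcal{F}$, pick by hypothesis a map $g_{F} : F \to \ell^{2}$ with $\dist(g_{F}) \leq C$. After rescaling we may assume
\[
	d_{X}(x,y) \leq \|g_{F}(x) - g_{F}(y)\|_{2} \leq C\,d_{X}(x,y)
\]
for every $x, y \in F$, and after translating we may assume $g_{F}(x_{0}) = 0$, which forces the uniform bound $\|g_{F}(x)\|_{2} \leq C\,d_{X}(x, x_{0})$ on $F$. Now fix a non-principal ultrafilter $\mathcal{U}$ on $\mathcal{F}$ refining the order filter, so that $\{F : F_{0} \subseteq F\} \in \mathcal{U}$ for every $F_{0} \in \mathcal{F}$, and form the Banach-space ultraproduct $H := (\ell^{2})_{\mathcal{U}}$. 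Define $f : X \to H$ by
\[
	f(x) = \bigl[(g_{F}(x))_{F}\bigr]_{\mathcal{U}},
\]
where $g_{F}(x) := 0$ when $x \notin F$. Since $\{F \in \mathcal{F} : x \in F\} \in \mathcal{U}$ for every $x \in X$, the representing family is eventually norm-bounded by $C\,d_{X}(x, x_{0})$, and hence $f(x)$ is a well-defined element of $H$. For any $x, y \in X$ the tail $\{F : \{x,y\} \subseteq F\}$ is in $\mathcal{U}$, so passing the bi-lipschitz inequality through the ultralimit of norms gives $d_{X}(x,y) \leq \|f(x) - f(y)\|_{H} \leq C\,d_{X}(x,y)$, and hence $\dist(f) \leq C$.

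The one non-trivial point, and the main obstacle, is to observe that $H$ really is a Hilbert space rather than merely a Banach space. This is standard: the parallelogram identity $\|u + v\|^{2} + \|u - v\|^{2} = 2\|u\|^{2} + 2\|v\|^{2}$ holds in each $\ell^{2}$ factor and survives replacement of norms by their ultralimits, and any Banach space satisfying the parallelogram identity is isometrically a Hilbert space. Everything else — well-definedness of the quotient norm, its coincidence with the ultralimit of norms on bounded representing families, and inheritance of scalar inequalities — is routine ultraproduct bookkeeping. A purely compactness-based alternative (Tychonoff over $x \in X$ with each coordinate in a weakly compact ball) can be made to work for the upper bi-lipschitz bound, but weak lower semicontinuity of the norm does not hand one the lower bound for free; the ultraproduct route is what makes both inequalities pass through simultaneously.
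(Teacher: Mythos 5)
Your argument is correct. Note first that the paper does not actually contain a proof of this proposition: it is quoted from Wells and Williams \cite{Wells 1}, so there is no in-text argument to compare against line by line. Your ultraproduct proof is a legitimate self-contained alternative: the normalisation $g_{F}(x_{0})=0$ gives the uniform bound needed for each representing family to live in the $\ell^{\infty}$-product, the tails $\{F:F_{0}\subseteq F\}$ have the finite intersection property because $\mathcal{F}$ is directed (so the required ultrafilter exists), both bi-lipschitz inequalities pass through the ultralimit of norms because they hold on a $\mathcal{U}$-large set of indices, and the parallelogram law plus Jordan--von Neumann identifies the ultraproduct as a Hilbert space. The classical proof (the one in the cited reference, and the compactness route you mention and reject) takes limits not of the maps $g_{F}$ but of their Gram kernels $K_{F}(x,y)=\langle g_{F}(x),g_{F}(y)\rangle$: positive semidefiniteness, the bound $|K_{F}(x,y)|\leq C^{2}d_{X}(x,x_{0})d_{X}(y,x_{0})$, and the two-sided estimates on $K_{F}(x,x)-2K_{F}(x,y)+K_{F}(y,y)$ are all closed pointwise conditions, so Tychonoff yields a cluster point $K$ which a GNS-type construction realises as the Gram kernel of a map into Hilbert space; this recovers \emph{both} inequalities exactly, because squared distances are continuous functions of finitely many kernel entries --- that is the standard fix for the lower-bound obstruction you correctly identify in the naive weak-compactness approach. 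Your route is shorter and avoids the kernel formalism, at the price of an ultrafilter and a typically nonseparable target; the kernel route is more elementary and closer in spirit to the negative-type machinery used elsewhere in the paper. One cosmetic point: when $X$ is finite the order filter on $\mathcal{F}$ is principal and admits no non-principal refinement, so either dispose of that (trivial) case separately or simply delete the word ``non-principal'', which your argument never uses.
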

	
	Combined with Theorem \ref{Linial London Rabinovich}, this allows us to say the following.
	
	\begin{theorem}\label{Infinte Distorted Embedding Theorem}
		Let $(X,d_{X})$ be a semi-metric space, $0<p<\infty$ and $1\leq C<\infty$. Then $X$ has $p$-negative type with distortion $C$ if and only if $(X,d_{X}^{p/2})$ embeds in a Hilbert space with distortion at most $C$.
	\end{theorem}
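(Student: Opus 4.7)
The plan is to reduce the statement to the finite case and then invoke Theorem \ref{Linial London Rabinovich} directly. The first observation is that the condition in Definition \ref{Distorted pneg type def} is itself only a statement about finite subfamilies of $X$, so $(X,d_{X})$ has $p$-negative type with distortion $C$ if and only if every finite subset $Y\subseteq X$ does. On the other side, Proposition \ref{Local Property} applied to the semi-metric space $(X,d_{X}^{p/2})$ says that $(X,d_{X}^{p/2})$ embeds in a Hilbert space with distortion at most $C$ if and only if every finite subset $(Y,d_{X}^{p/2})$ embeds in $\ell^{2}$ with distortion at most $C$. Thus it suffices to establish the equivalence in the finite case.

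For a finite subset $Y=\{x_{1},\dots,x_{n}\}\subseteq X$, I would apply Theorem \ref{Linial London Rabinovich} to the finite semi-metric space $(Y,d_{Y})$, where $d_{Y}$ is the restriction of $d_{X}^{p/2}$ to $Y$. Since $d_{Y}(x_{i},x_{j})^{2}=d_{X}(x_{i},x_{j})^{p}$, the inequality produced by Theorem \ref{Linial London Rabinovich} for $(Y,d_{Y})$ coincides, term-by-term and for every $Q\in\mathcal{O}_{n}(\R)$, with the defining inequality of $p$-negative type with distortion $C$ for the tuple $x_{1},\dots,x_{n}$. Hence $(Y,d_{X}^{p/2})$ admits a bi-lipschitz embedding into $\R^{n}$ with distortion at most $C$ precisely when $Y$ has $p$-negative type with distortion $C$. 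Embedding into $\R^{n}$ with distortion at most $C$ is equivalent to embedding into $\ell^{2}$ with distortion at most $C$ for a finite set, since in either direction one can identify $\R^{n}$ isometrically with a subspace of $\ell^{2}$, and conversely the image of any $n$-point set in $\ell^{2}$ lies in an at most $n$-dimensional subspace.

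Chaining the three equivalences (finite quantification in Definition \ref{Distorted pneg type def}; the finite-case equivalence from Theorem \ref{Linial London Rabinovich}; and the finite determinacy of Hilbertian distortion from Proposition \ref{Local Property}) then yields the theorem. There is no substantive obstacle in this argument: the content is entirely carried by the two cited results, and the proof amounts to stitching them together through the substitution $d_{Y}=d_{X}^{p/2}$. The only point that requires a moment's care is the routine identification of finite-dimensional Euclidean embeddings with $\ell^{2}$-embeddings for finite sets, and this is standard.
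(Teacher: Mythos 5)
Your proposal is correct and follows exactly the route the paper intends: the paper derives this theorem by combining Proposition \ref{Local Property} with Theorem \ref{Linial London Rabinovich} under the substitution $d_{Y}=d_{X}^{p/2}$, leaving the details (which you have correctly supplied, including the routine identification of $\R^{n}$-embeddings with $\ell^{2}$-embeddings for finite sets) to the reader.
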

	
	It is worth noting that for finite semi-metric spaces the definition of distorted $p$-negative type simplifies somewhat, since there is no need to vary over all distinct $x_{1},\dots,x_{n}\in X$. This is a direct consequence of Theorem \ref{Linial London Rabinovich} and Theorem \ref{Infinte Distorted Embedding Theorem}.
	
	\begin{theorem}\label{Finite Distorted Embedding Theorem}
		Let $(X,d_{X})=(\{x_{1},\dots,x_{n}\},d_{X})$ be a finite semi-metric space, $0\leq p<\infty$ and $1\leq C<\infty$. Then the following statements are equivalent in pairs.
		\begin{enumerate}
			\item $X$ has $p$-negative type with distortion $C$.
			\item $(X,d_{X}^{p/2})$ embeds in $\R^{n}$ with distortion at most $C$.
			\item The inequality
			\[ \sum_{q_{ij}>0}d_{X}(x_{i},x_{j})^{p}q_{ij}+C^{2}\sum_{q_{ij}<0}d_{X}(x_{i},x_{j})^{p}q_{ij}\leq 0
			\]
			holds for all $Q=(q_{ij})_{i,j=1}^{n}\in\mathcal{O}_{n}(\R)$.
		\end{enumerate}
	\end{theorem}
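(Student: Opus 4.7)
The plan is to deduce the theorem by combining Theorem \ref{Linial London Rabinovich} with Theorem \ref{Infinte Distorted Embedding Theorem}, together with a short padding argument that explains why in the finite case it is enough to test the inequality only on the fixed enumeration $x_{1},\dots,x_{n}$ rather than on every distinct tuple of points of $X$.

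First I would address $(2)\Leftrightarrow(3)$. Apply Theorem \ref{Linial London Rabinovich} to the semi-metric space $(X,d_{X}^{p/2})$. Since $d_{X}^{p/2}(x_{i},x_{j})^{2}=d_{X}(x_{i},x_{j})^{p}$, the characterising inequality produced by that theorem is literally the inequality in (3). Thus $(X,d_{X}^{p/2})$ embeds in $\R^{n}$ with distortion at most $C$ if and only if (3) holds.

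Next I would prove $(1)\Leftrightarrow(3)$. The direction $(1)\Rightarrow(3)$ is immediate, since (3) is the special case of Definition \ref{Distorted pneg type def} applied to the full enumeration $x_{1},\dots,x_{n}$. For the converse, given any $k\geq 2$, distinct $y_{1},\dots,y_{k}\in X$ and any $Q'=(q'_{ab})_{a,b=1}^{k}\in\mathcal{O}_{k}(\R)$, choose indices $1\leq i_{1}<\dots<i_{k}\leq n$ with $y_{\ell}=x_{i_{\ell}}$ (possible since $X=\{x_{1},\dots,x_{n}\}$) and construct $Q=(q_{ij})_{i,j=1}^{n}\in M_{n}(\R)$ by setting $q_{i_{a}i_{b}}=q'_{ab}$ and all other entries to zero. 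A quick check shows that $Q$ is symmetric, that $Q\1=0$, and that $\langle Q\xi,\xi\rangle=\langle Q'\xi',\xi'\rangle\geq 0$, where $\xi'\in\R^{k}$ denotes the subvector of $\xi\in\R^{n}$ indexed by $i_{1},\dots,i_{k}$. Hence $Q\in\mathcal{O}_{n}(\R)$, and the inequality in (3) applied to this $Q$ reduces exactly to the desired inequality for $Q'$ on the points $y_{1},\dots,y_{k}$.

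Finally, $(1)\Leftrightarrow(2)$ is delivered by Theorem \ref{Infinte Distorted Embedding Theorem}, once one notes that an embedding of the $n$-point space $(X,d_{X}^{p/2})$ into any Hilbert space lands in the (at most $n$-dimensional) span of its image and may therefore be taken to land in $\R^{n}$; alternatively it follows by transitivity from the two equivalences above. There is no real obstacle here: the substantive content is only the zero-padding observation that identifies the seemingly stronger condition of Definition \ref{Distorted pneg type def} with the single fixed-size condition in (3).
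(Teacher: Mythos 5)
Your proposal is correct and follows essentially the same route as the paper, which simply declares the theorem a direct consequence of Theorem \ref{Linial London Rabinovich} and Theorem \ref{Infinte Distorted Embedding Theorem} without further detail. The zero-padding argument you supply for $(3)\Rightarrow(1)$ is the one implicit detail worth recording, and it checks out: the padded matrix $Q$ is symmetric, positive semidefinite, and satisfies $Q\1=0$ because each row either vanishes or sums to the corresponding row sum of $Q'$.
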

	
	Again, an extremely useful corollary of the above theorem is the following formula for the Euclidean distortion of a finite semi-metric space.
	
	\begin{corollary}\label{Euclidean Distortion Formula}
		Let $(X,d_{X})=(\{x_{1},\dots,x_{n}\},d_{X})$ be a semi-metric space and $0<p<\infty$. Then
		\[ c_{2}(X,d_{X}^{p/2})^{2}=\max\bigg\{\frac{\sum_{q_{ij}>0}d_{X}(x_{i},x_{j})^{p}q_{ij}}{-\sum_{q_{ij}<0}d_{X}(x_{i},x_{j})^{p}q_{ij}}:Q\in\mathcal{O}_{n}(\R), Q\neq 0\bigg\}.
		\]
	\end{corollary}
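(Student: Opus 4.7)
The plan is to read Theorem~\ref{Finite Distorted Embedding Theorem} as saying that embeddability of $(X,d_X^{p/2})$ into $\R^n$ with distortion at most $C$ is equivalent to an inequality that can be rearranged into a ratio bound on $C^2$. Concretely, for $Q\in\mathcal{O}_n(\R)$ set
\[ A(Q)=\sum_{q_{ij}>0}d_{X}(x_{i},x_{j})^{p}q_{ij},\qquad B(Q)=-\sum_{q_{ij}<0}d_{X}(x_{i},x_{j})^{p}q_{ij},
\]
both of which are $\ge 0$. Part~(3) of Theorem~\ref{Finite Distorted Embedding Theorem} then reads $A(Q)\leq C^{2}B(Q)$ for every $Q\in\mathcal{O}_n(\R)$, so by part~(2), $c_{2}(X,d_{X}^{p/2})^{2}$ is the infimum of those $C^{2}\geq 1$ with $A(Q)\leq C^{2}B(Q)$ for all $Q\in\mathcal{O}_n(\R)$. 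This infimum is precisely $\sup\{A(Q)/B(Q):Q\in\mathcal{O}_n(\R),\,Q\neq 0\}$, provided the ratio is well-defined and the supremum is attained.

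The crucial step is to rule out $B(Q)=0$ for nonzero $Q\in\mathcal{O}_n(\R)$. Since the points $x_1,\dots,x_n$ are distinct, $d_X(x_i,x_j)>0$ for $i\neq j$, so $B(Q)=0$ forces $q_{ij}\ge 0$ for every $i\neq j$. Combined with the row sum condition $Q\1=0$, this gives $q_{ii}=-\sum_{j\neq i}q_{ij}\leq 0$, while positive semidefiniteness of $Q$ forces $q_{ii}\geq 0$. Hence $q_{ii}=0$, which together with $q_{ij}\geq 0$ and $\sum_j q_{ij}=0$ forces $q_{ij}=0$ for all $i,j$, contradicting $Q\neq 0$. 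Thus $B(Q)>0$ on the nonzero elements of $\mathcal{O}_n(\R)$, and the ratio $A/B$ is well-defined.

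To upgrade the supremum to a maximum, I would rewrite $A$ and $B$ using positive and negative parts: for $i\neq j$, $A(Q)=\sum_{i\neq j}d_X(x_i,x_j)^{p}\max(q_{ij},0)$ and $B(Q)=\sum_{i\neq j}d_X(x_i,x_j)^{p}\max(-q_{ij},0)$. In this form $A$ and $B$ are continuous in $Q$. Since $A/B$ is homogeneous of degree zero, the supremum over nonzero $Q\in\mathcal{O}_n(\R)$ equals the supremum over the compact set $\{Q\in\mathcal{O}_n(\R):\|Q\|=1\}$, on which $B>0$ by the previous paragraph; hence the supremum is attained.

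I do not expect any serious obstacle; the only point that requires a small argument (rather than a citation of the preceding theorem) is the nonvanishing of $B$ on nonzero $Q\in\mathcal{O}_n(\R)$, which uses both the positive semidefiniteness and the kernel condition defining $\mathcal{O}_n(\R)$.
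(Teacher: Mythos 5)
Your approach is the intended one: the paper offers no proof of Corollary \ref{Euclidean Distortion Formula}, presenting it as an immediate consequence of Theorem \ref{Finite Distorted Embedding Theorem}, and the two details you supply are exactly the ones needed to make ``immediate'' rigorous. Your argument that $B(Q)>0$ for every nonzero $Q\in\mathcal{O}_{n}(\R)$ is correct, and is essentially the contrapositive of the argument the paper itself runs inside the proof of Proposition \ref{C Nesting} (some $q_{kk}>0$, hence $Q\1=0$ forces some off-diagonal $q_{ij}<0$, and $d_{X}(x_{i},x_{j})>0$ since the points are distinct). Your compactness-plus-homogeneity argument for attainment of the supremum parallels the paper's treatment of the compact set $\{Q\in\mathcal{O}_{n}(\R):\pos(Q)=1\}$ following Definition \ref{Delta Definition}; normalising by $\pos(Q)$ instead of $\|Q\|$ would work equally well.

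There is, however, one step that fails, and it exposes an imprecision in the statement itself. The set of admissible constants is $\{C\geq 1: A(Q)\leq C^{2}B(Q)\ \text{for all}\ Q\}$, and its infimum is $\max\bigl(1,\sup_{Q\neq 0}A(Q)/B(Q)\bigr)^{1/2}$, not $\bigl(\sup_{Q\neq 0}A(Q)/B(Q)\bigr)^{1/2}$: the constraint $C\geq 1$ cannot simply be discarded, and your sentence ``this infimum is precisely $\sup\{A(Q)/B(Q)\}$'' is false whenever that supremum is below $1$. This genuinely happens, namely exactly when $X$ has \emph{strict} $p$-negative type (equivalently, by Proposition \ref{C=1 Equivalence} and Theorem \ref{Correct Delta}, when $A(Q)<B(Q)$ for all nonzero $Q$, so that the maximum over the compact normalised set is strictly less than $1$). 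Already for a two-point space every nonzero $Q\in\mathcal{O}_{2}(\R)$ has its positive entries on the diagonal, where $d_{X}(x_{i},x_{i})=0$, so $A(Q)=0$ and the displayed maximum is $0$ while $c_{2}=1$; the same discrepancy occurs for $K_{m,n}$ with $p<\wp_{m,n}$ by Theorem \ref{Kmn Embedding}. The correct formula is $c_{2}(X,d_{X}^{p/2})^{2}=\max\bigl(1,\,\cdots\bigr)$. This is a defect of the corollary as printed rather than of your strategy, but your write-up asserts the identification of infimum and supremum without flagging the hypothesis $\sup\geq 1$ that it requires, so you should either add that hypothesis or amend the formula accordingly.
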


	We now show that distorted $p$-negative type satisfies a simple nesting result.

	\begin{proposition}\label{C Nesting}
		Let $(X,d_{X})$ be a semi-metric space, $0<p<\infty$ and $1\leq C_{1}<C_{2}<\infty$. If $X$ has $p$-negative type with distortion $C_{1}$ then $X$ has strict $p$-negative type with distortion $C_{2}$.
	\end{proposition}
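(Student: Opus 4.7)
The plan is to isolate the positive and negative contributions in the distorted $p$-negative type inequality and use monotonicity in $C$ combined with a strict positivity argument for the ``negative part''. Fix $n\geq 2$, distinct $x_{1},\dots,x_{n}\in X$ and a nonzero $Q=(q_{ij})\in\mathcal{O}_{n}(\R)$, and introduce the abbreviations
\[ P=\sum_{q_{ij}>0}d_{X}(x_{i},x_{j})^{p}q_{ij},\qquad N=-\sum_{q_{ij}<0}d_{X}(x_{i},x_{j})^{p}q_{ij}.
\]
Both are nonnegative (since $p>0$), and the assumption that $X$ has $p$-negative type with distortion $C_{1}$ reads $P\leq C_{1}^{2}N$. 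We want the strict inequality $P<C_{2}^{2}N$. Provided one can show $N>0$, this is immediate: $P\leq C_{1}^{2}N<C_{2}^{2}N$, because $C_{1}<C_{2}$.

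So the whole argument reduces to showing $N>0$ for every nonzero $Q\in\mathcal{O}_{n}(\R)$ and any choice of distinct points. This is the only step that uses structure, and the reasoning I would give is as follows. Since $Q$ is positive semidefinite and nonzero, at least one eigenvalue of $Q$ is strictly positive, so $\operatorname{tr}(Q)=\sum_{i}q_{ii}>0$; in particular there exists $i$ with $q_{ii}>0$. The condition $Q\mathbf{1}=0$ then forces $\sum_{j\neq i}q_{ij}=-q_{ii}<0$, so there is some $j\neq i$ with $q_{ij}<0$. Because $x_{i}\neq x_{j}$ we have $d_{X}(x_{i},x_{j})>0$, hence $-d_{X}(x_{i},x_{j})^{p}q_{ij}>0$, which contributes a strictly positive term to $N$, giving $N>0$ as required.

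I expect the main (and only) obstacle to be exactly this positivity step: one must ensure that a nonzero element of $\mathcal{O}_{n}(\R)$ necessarily has a strictly negative off-diagonal entry, since otherwise one could have $N=0$ and the conclusion would collapse. The trace argument above handles this cleanly. After that, the implication $C_{1}<C_{2}\Rightarrow C_{1}^{2}N<C_{2}^{2}N$ finishes the proof in one line, and since $Q\neq 0$ was arbitrary, $X$ has strict $p$-negative type with distortion $C_{2}$.
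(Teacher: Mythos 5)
Your proof is correct and follows essentially the same route as the paper: both arguments reduce to showing that a nonzero $Q\in\mathcal{O}_{n}(\R)$ must have a strictly negative off-diagonal entry, by first locating a strictly positive diagonal entry and then invoking $Q\1=0$. The only cosmetic difference is that you obtain $q_{ii}>0$ from $\operatorname{tr}(Q)>0$ for a nonzero positive semidefinite matrix, whereas the paper gets it from $\langle Qe_{k},e_{k}\rangle>0$ via a square-root argument; your version is, if anything, slightly more direct.
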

	\begin{proof}
		Take $n\geq 2$, distinct $x_{1},\dots,x_{n}\in X$ and $Q=(q_{ij})_{i,j=1}^{n}\in\mathcal{O}_{n}(\R)$ with $Q\neq 0$. Now, since $Q\in M_{n}^{+}(\R)$, note that $\langle Q\xi,\xi\rangle=0$ if and only if $Q\xi=0$. One direction is obvious. For the other, use the existence of the square root $Q^{1/2}$. Then $\langle Q\xi,\xi\rangle=0$ implies that $\|Q^{1/2}\xi\|_{2}^{2}=\langle Q\xi,\xi\rangle=0$ and so $Q^{1/2}\xi=0$, and thus $Q\xi=Q^{1/2}(Q^{1/2}\xi)=0$. Also, by linearity, since $Q\neq 0$ there exists some $1\leq k\leq n$ such that $Qe_{k}\neq 0$, and so $\langle Qe_{k},e_{k}\rangle>0$. Since $Q\mathbbm{1}=0$ this means there exists some $i\neq j$ such that $q_{ij}<0$ and since $i\neq j$ we also have that $q_{ij}d_{X}(x_{i},x_{j})^{p}<0$. Hence
		\[ C_{2}^{2}\sum_{q_{ij}<0}d_{X}(x_{i},x_{j})^{p}q_{ij}<C_{1}^{2}\sum_{q_{ij}<0}d_{X}(x_{i},x_{j})^{p}q_{ij}.
		\]
		But then
		\begin{align*}
			&\sum_{q_{ij}>0}d_{X}(x_{i},x_{j})^{p}q_{ij}+C_{2}^{2}\sum_{q_{ij}<0}d_{X}(x_{i},x_{j})^{p}q_{ij}
			\\&\quad\quad<\sum_{q_{ij}>0}d_{X}(x_{i},x_{j})^{p}q_{ij}+C_{1}^{2}\sum_{q_{ij}<0}d_{X}(x_{i},x_{j})^{p}q_{ij}
			\\&\quad\quad\leq 0
		\end{align*}
		which shows that $X$ has strict $p$-negative type with distortion $C_{2}$.
	\end{proof}

	We may also bootstrap the nesting property from Theorem \ref{Basic Negative Type Results} to obtain the following analogous nesting result for distorted $p$-negative type.
	
	\begin{theorem}\label{Strict Nesting}
		Let $(X,d_{X})$ be a semi-metric space, $0<q<p<\infty$ and $1\leq C<\infty$. If $X$ has $p$-negative type with distortion $C$ then $X$ has strict $q$-negative type with distortion $C^{q/p}$.
	\end{theorem}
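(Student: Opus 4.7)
The plan is to invoke Theorem \ref{Infinte Distorted Embedding Theorem} to pass from the hypothesis to a concrete embedding $\phi\colon X\to\ell^{2}$ and scale $s>0$ with
\[
s\,d_{X}(x,y)^{p/2}\leq\|\phi(x)-\phi(y)\|_{2}\leq sC\,d_{X}(x,y)^{p/2}
\]
for all $x,y\in X$. Setting $\rho(x,y)=\|\phi(x)-\phi(y)\|_{2}$ produces a Hilbertian semi-metric on $X$, which has $2$-negative type by Schoenberg; because $r:=2q/p\in(0,2)$, part (2) of Theorem \ref{Basic Negative Type Results} gives that $\rho$ has strict $r$-negative type.

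Next, fix distinct $x_{1},\dots,x_{n}\in X$ and a nonzero $Q=(q_{ij})\in\mathcal{O}_{n}(\R)$. Raising the two-sided distortion bounds to the power $r$ gives $d_{X}(x_{i},x_{j})^{q}\leq s^{-r}\rho(x_{i},x_{j})^{r}$ and $d_{X}(x_{i},x_{j})^{q}\geq (sC)^{-r}\rho(x_{i},x_{j})^{r}$. Multiplying the first estimate by $q_{ij}$ on entries with $q_{ij}>0$, and the second by $q_{ij}$ on entries with $q_{ij}<0$ (which flips the inequality), then scaling the latter group by $C^{r}$, the factor $C^{r}$ cancels the upper distortion constant exactly. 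Summing, and observing that diagonal terms and entries with $q_{ij}=0$ contribute nothing to either side, yields
\[
\sum_{q_{ij}>0}d_{X}(x_{i},x_{j})^{q}q_{ij}+C^{r}\sum_{q_{ij}<0}d_{X}(x_{i},x_{j})^{q}q_{ij}\leq s^{-r}\sum_{i,j=1}^{n}\rho(x_{i},x_{j})^{r}q_{ij}.
\]

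It remains to show the right-hand side is strictly negative whenever $Q\neq0$. Since $d_{X}(x_{i},x_{j})>0$ for $i\neq j$, the lower distortion estimate forces $\rho(x_{i},x_{j})>0$ there, so the $x_{i}$ remain pairwise distinct under $\rho$. The strict $r$-negative type of $\rho$ together with the decomposition $Q=\sum_{k=1}^{\rank(Q)}\xi^{(k)}(\xi^{(k)})^{T}$ used in the proof of Proposition \ref{C=1 Equivalence} (an argument that uses no triangle inequality and therefore applies to the semi-metric $\rho$) gives $\sum_{i,j}\rho(x_{i},x_{j})^{r}q_{ij}<0$. Combining, the whole displayed inequality becomes strict, which is exactly strict $q$-negative type with distortion $C^{q/p}=C^{r/2}$.

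The main conceptual point, and the only step that really needs to be noticed, is that the exponent $r=2q/p$ and the factor $C^{r}$ appearing in the definition of distorted $q$-negative type are calibrated precisely so that the two-sided distortion estimates on $\phi$ telescope into a one-sided bound governed by a classical Hilbertian $r$-negative type expression; once this cancellation is spotted, the strictness falls out for free from the classical Schoenberg strict nesting applied to the auxiliary Hilbertian metric $\rho$.
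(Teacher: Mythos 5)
Your proposal is correct and follows essentially the same route as the paper's proof: pass to a Hilbert-space embedding of $(X,d_{X}^{p/2})$ via Theorem \ref{Infinte Distorted Embedding Theorem}, apply the classical Schoenberg strict nesting (Theorem \ref{Basic Negative Type Results}(2)) with exponent $2q/p<2$ to the Hilbertian distances, and combine the two distortion bounds entrywise according to the sign of $q_{ij}$ so that $C^{2q/p}$ cancels exactly. You merely carry the scaling constant $s$ explicitly and spell out the distinctness of the images and the rank-one decomposition step, details the paper leaves implicit.
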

	\begin{proof}
		Since $X$ has $p$-negative type with distortion $C$, Theorem \ref{Infinte Distorted Embedding Theorem} gives that $(X,d_{X}^{p/2})$ embeds into some Hilbert space $(H,\|\cdot\|_{2})$ with distortion at most $C$. That is, there exists a map $\phi:X\rightarrow H$ such that
			\[ d_{X}(x,y)^{p/2}\leq\|\phi(x)-\phi(y)\|_{2}\leq C\,d_{X}(x,y)^{p/2}
			\]
		for all $x,y\in X$. Take $n\geq 2$, distinct $x_{1},\dots,x_{n}\in X$ and $Q=(q_{ij})_{i,j=1}^{n}\in\mathcal{O}_{n}(\R)$, with $Q\neq 0$. Since $0<2q/p<2$, we have that $H$ has strict $2q/p$-negative type (see Theorem \ref{Basic Negative Type Results}), and so
			\[ \sum_{i,j=1}^{n}\|\phi(x_{i})-\phi(x_{j})\|^{2q/p}_{2}q_{ij}<0.
			\]
		Putting this all together, we have that
			\[ \sum_{q_{ij}>0}d_{X}(x_{i},x_{j})^{q}q_{ij}+C^{2q/p}\sum_{q_{ij}<0}d_{X}(x_{i},x_{j})^{q}q_{ij}\leq\sum_{i,j=1}^{n}\|\phi(x_{i})-\phi(x_{j})\|^{2q/p}_{2}q_{ij}<0
			\]
		and so we are done.
	\end{proof}
	
	\section{Strictness and Polygonal Equalities}\label{Strictness etc}
	
	A very large part of the recent research effort into $p$-negative type has revolved around the notions of strict $p$-negative type and polygonal equalities (see for example results in \cite{Kelleher 1, Li 1, Doust 1, Faver 1, Doust 2, Murugan 1}). In this section, we derive some basic results pertaining to these concepts in the distorted setting, including proving the existence of certain non-trivial polygonal equalities.
	
	Let us start by defining what we mean by a polygonal equality here. In the distorted setting, Theorem \ref{Infinte Distorted Embedding Theorem} prompts us to define our polygonal equalities slightly differently. When we talk about polygonal equalities from now on, we will use the following (new) definition.
	
	\begin{definition}\label{Poly Eq Def}
		Let $(X,d_{X})$ be a semi-metric space, $0\leq p<\infty$ and $1\leq C<\infty$. A $p$-polygonal equality with distortion $C$ (or a $C$-distorted $p$-polygonal equality) in $X$ is an equality of the form
		\[ \sum_{q_{ij}>0}d_{X}(x_{i},x_{j})^{p}q_{ij}+C^{2}\sum_{q_{ij}<0}d_{X}(x_{i},x_{j})^{p}q_{ij}=0
		\]
		for some distinct $x_{1},\dots,x_{n}\in X$, $Q=(q_{ij})_{i,j=1}^{n}\in\mathcal{O}_{n}(\R)$ and $n\geq 2$. Such an equality is said to be non-trivial if $Q\neq 0$. Also, the rank of such a polygonal equality is defined to be the rank of the matrix $Q$.
	\end{definition}
	
	It follows immediately from the above definition and the definition of strict $p$-negative type with distortion (Definition \ref{Distorted pneg type def}) that a semi-metric space $X$ has strict $p$-negative type with distortion $C$ if and only if it has $p$-negative type with distortion $C$ and $X$ admits no non-trivial $p$-polygonal equalities with distortion $C$.
	
	At this point, we must stop and justify our use of the terminology `polygonal equality'. Indeed the above definition in its current form looks nothing like the usual definition of polygonal equalities as they have appeared in the literature at this point in time. We now show the connection between our definition and the usual one.
	
	Let $(X,d_{X})$ be a semi-metric space, $x_{1},\dots,x_{n}\in X$ be distinct, $p\geq0$ and $Q=(q_{ij})_{i,j=1}^{n}\in\mathcal{O}_{n}(\R)$. By our definition a $p$-polygonal equality with distortion $1$ is an equality of the form
	\[ \sum_{i,j=1}^{n}d_{X}(x_{i},x_{j})^{p}q_{ij}=0.
	\]
	Now, as in the proof of Proposition \ref{C=1 Equivalence}, we have that that $\rank(Q)\leq1$ if and only if there exist $\xi_{1},\dots,\xi_{n}\in\R$ with $\sum_{i=1}^{n}\xi_{i}=0$ such that $q_{ij}=\xi_{i}\xi_{j}$, for all $1\leq i,j\leq n$. So, in this case we actually have that
	\[ \sum_{i,j=1}^{n}d_{X}(x_{i},x_{j})\xi_{i}\xi_{j}=0.
	\]
	Readers familiar with the theory of $p$-negative type will recognise this as one of the more standard definitions of a polygonal equality, from say \cite{Kelleher 1}. To summarise, our definition of rank $1$ polygonal equalities with distortion $1$ is equivalent to the standard definition of a polygonal equality from the isometric theory of $p$-negative type.
	
	An invaluable tool in the study of strict $p$-negative type and polygonal equalities in the isometric setting is the $p$-negative type gap function. This was originally defined by Doust and Weston in \cite{Doust 1} and studied extensively in \cite{Doust 1, Doust 2, Li 1, Wolf 1, Wolf 2}.
	
	Here we introduce an analogue of the $p$-negative type gap in the distorted setting. For what follows, if $n\geq 1$ and $A=(a_{ij})_{i,j=1}^{n}\in M_{n}(\R)$, we will use the notation
		\[ \pos(A)=\sum_{a_{ij}>0}a_{ij}.
		\]
	\begin{definition}\label{Delta Definition}
		Let $(X,d_{X})=(\{x_{1},\dots,x_{n}\},d_{X})$ be a finite semi-metric space. The \textit{distorted type gap function} for $X$ is defined to be the function $\Delta_{X}:[0,\infty)\times[1,\infty)\rightarrow\R$ given by
		\[ \Delta_{X}(p,C)=\inf_{\substack{Q\in\mathcal{O}_{n}(\R) \\ \pos(Q)=1}}-C^{2}\sum_{q_{ij}<0}d_{X}(x_{i},x_{j})^{p}q_{ij}-\sum_{q_{ij}>0}d_{X}(x_{i},x_{j})^{p}q_{ij}
		\]
		for all $0\leq p<\infty$ and $1\leq C<\infty$.
	\end{definition}
	
	First we remark that $\Delta_{X}(p,C)$ is always finite. To see this let $\mathcal{Q}$ denote the set of all $Q\in\mathcal{O}_{n}(\R)$ with $\pos(Q)=1$ and topologize $\mathcal{Q}$ with the pointwise topology (i.e. $Q_{k}\rightarrow Q$ if and only if $Q_{k}$ converges to $Q$ entrywise). Note that since $M_{n}(\R)$ is finite-dimensional this coincides with the restriction of the unique norm topology on $M_{n}(\R)$ to $\mathcal{Q}$. It is a simple matter to check that with this topology $\mathcal{Q}$ is compact (it is a closed and bounded subset of $M_{n}(\R)$). Also, define $f:[0,\infty)\times[1,\infty)\times\mathcal{Q}\rightarrow\R$ by
	\[ f(p,C,Q)=-C^{2}\sum_{q_{ij}<0}d_{X}(x_{i},x_{j})^{p}q_{ij}-\sum_{q_{ij}>0}d_{X}(x_{i},x_{j})^{p}q_{ij}.
	\]
	Then $f$ is continuous and also
	\[ \Delta_{X}(p,C)=\inf_{Q\in\mathcal{Q}}f(p,C,Q).
	\]
	So, since $f$ is continuous and $\mathcal{Q}$ is compact it follows that $\Delta_{X}(p,C)$ is always finite and that the above infimum is always attained for some $Q\in\mathcal{Q}$.
	
	The fact that $\Delta_{X}(p,C)$ is the infimum over a compact set also implies that it must be continuous. Here we will use the following result from elementary analysis. The proof of this result is left as an exercise for the reader.
	
	\begin{proposition}\label{Inf Continuity}
		Let $(Y,\tau_{Y})$ be a topological space and let $(Z,\tau_{Z})$ be a compact topological space. Suppose that $f:Y\times Z\rightarrow\R$ is continuous (with respect to the product topology on $Y\times Z$) and define $g:Y\rightarrow\R$ by
		\[ g(y)=\inf_{z\in Z}f(y,z)
		\]
		for all $y\in Y$. Then $g$ is continuous.
	\end{proposition}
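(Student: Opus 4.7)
The plan is to reduce continuity of $g$ at an arbitrary point $y_{0}\in Y$ to a two-sided $\varepsilon$-estimate, proving upper semicontinuity and lower semicontinuity separately. Since $Z$ is compact and $f(y_{0},\cdot)$ is continuous, the infimum defining $g(y_{0})$ is actually attained at some point $z_{0}\in Z$, and the same holds at every $y\in Y$.

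For upper semicontinuity (which is the easy half) I would fix the minimizer $z_{0}$ of $f(y_{0},\cdot)$ and use continuity of $f$ at the single point $(y_{0},z_{0})$ to produce a neighborhood $V_{1}$ of $y_{0}$ on which $f(y,z_{0})<f(y_{0},z_{0})+\varepsilon=g(y_{0})+\varepsilon$. Since $g(y)\leq f(y,z_{0})$ by definition of the infimum, this gives $g(y)<g(y_{0})+\varepsilon$ for $y\in V_{1}$, using no compactness beyond attainment of the minimum.

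For lower semicontinuity (the harder half, where compactness of $Z$ is essential) I would argue via a finite subcover. For each $z\in Z$, continuity of $f$ at $(y_{0},z)$ produces neighborhoods $V_{z}$ of $y_{0}$ in $Y$ and $W_{z}$ of $z$ in $Z$ such that $f(y,z')>f(y_{0},z)-\varepsilon$ whenever $(y,z')\in V_{z}\times W_{z}$. The open sets $\{W_{z}\}_{z\in Z}$ cover $Z$, and by compactness finitely many $W_{z_{1}},\dots,W_{z_{k}}$ suffice. Setting $V_{2}=V_{z_{1}}\cap\cdots\cap V_{z_{k}}$ (a neighborhood of $y_{0}$) and choosing for each $z'\in Z$ an index $i$ with $z'\in W_{z_{i}}$, we obtain
\[ f(y,z')>f(y_{0},z_{i})-\varepsilon\geq g(y_{0})-\varepsilon
\]
for all $y\in V_{2}$ and all $z'\in Z$, whence $g(y)\geq g(y_{0})-\varepsilon$ on $V_{2}$.

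Taking $V=V_{1}\cap V_{2}$ then yields $|g(y)-g(y_{0})|<\varepsilon$ on $V$, completing the proof. The main obstacle is exactly the lower semicontinuity step: one cannot simply pick a single near-minimizer, because minimizers can jump discontinuously in $y$, and this is precisely why the compactness hypothesis on $Z$ cannot be removed and must be invoked via a finite subcover (equivalently, via a subnet argument extracting a limit point of approximate minimizers $z_{\alpha}\in Z$ along any net $y_{\alpha}\to y_{0}$).
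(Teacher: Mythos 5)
Your proof is correct: the upper-semicontinuity half via a single (near-)minimizer and the lower-semicontinuity half via a finite subcover of $Z$ together give the two-sided estimate, and every step checks out. The paper explicitly leaves this proposition as an exercise for the reader, so there is no in-paper proof to compare against; your argument is the standard one and is exactly what the author intends.
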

	
	\begin{corollary}\label{Continuity Results}
		Let $(X,d_{X})=(\{x_{1},\dots,x_{n}\},d_{X})$ be a finite semi-metric space. Then $\Delta_{X}:[0,\infty)\times[1,\infty)\rightarrow\R$ is continuous (with respect to the product topology on $[0,\infty)\times[1,\infty)$.
	\end{corollary}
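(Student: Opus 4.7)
The plan is to deduce the corollary directly from Proposition \ref{Inf Continuity}, taking $Y=[0,\infty)\times[1,\infty)$ with the product topology and $Z=\mathcal{Q}$, the compact set of $Q\in\mathcal{O}_{n}(\R)$ with $\pos(Q)=1$ described in the paragraph preceding the statement. Since we already have $\Delta_{X}(p,C)=\inf_{Q\in\mathcal{Q}}f(p,C,Q)$ for the function $f$ defined there, the only thing left to establish is the joint continuity of $f$ on $Y\times Z$; Proposition \ref{Inf Continuity} then immediately gives the result.

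The one subtlety is that the summation ranges $\{(i,j):q_{ij}>0\}$ and $\{(i,j):q_{ij}<0\}$ that appear in the definition of $f$ depend discontinuously on $Q$, so we cannot argue term-by-term in the form given. The fix is to decompose each entry as $q_{ij}=q_{ij}^{+}-q_{ij}^{-}$, where $q^{+}=\max(q,0)$ and $q^{-}=\max(-q,0)$. Since terms with $q_{ij}=0$ contribute nothing to either original sum, we can rewrite
\[ f(p,C,Q)=C^{2}\sum_{i,j=1}^{n}d_{X}(x_{i},x_{j})^{p}q_{ij}^{-}-\sum_{i,j=1}^{n}d_{X}(x_{i},x_{j})^{p}q_{ij}^{+}.
\]
Now every summand is visibly continuous as a function of $(p,C,Q)$: the scalar maps $q\mapsto q^{+}$ and $q\mapsto q^{-}$ are continuous on $\R$, $C\mapsto C^{2}$ is continuous, and for each pair $i\neq j$ the function $p\mapsto d_{X}(x_{i},x_{j})^{p}$ is continuous on $[0,\infty)$ (while the diagonal terms vanish under the convention $0^{p}=0$). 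A finite sum of continuous functions being continuous, $f$ is jointly continuous on $Y\times Z$, and the corollary follows.

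The only real obstacle is exactly this sign-switching issue, and the $q^{\pm}$ rewriting dissolves it; everything else is a direct appeal to the stated proposition.
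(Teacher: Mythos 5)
Your proof is correct and follows the same route as the paper: the corollary is obtained by applying Proposition \ref{Inf Continuity} to the compact set $\mathcal{Q}$ and the function $f$ introduced after Definition \ref{Delta Definition}. The only difference is that the paper simply asserts the continuity of $f$, whereas you supply the justification via the decomposition $q_{ij}=q_{ij}^{+}-q_{ij}^{-}$, which correctly handles the $Q$-dependent index sets; this is a worthwhile detail but not a different argument.
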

	
	It follows immediately from the above definition (and the fact that one may rescale appropriately) that if $(X,d_{X})=(\{x_{1},\dots,x_{n}\},d_{X})$ is a finite semi-metric space, $0\leq p<\infty$ and $1\leq C<\infty$ then $X$ has $p$-negative type with distortion $C$ if and only if $\Delta_{X}(p,C)\geq 0$. More importantly however is the following refinement of this property when dealing with strict distorted $p$-negative type. That is, $\Delta_{X}$ has the following property.
	
	\begin{theorem}\label{Correct Delta}
		Let $(X,d_{X})=(\{x_{1},\dots,x_{n}\},d_{X})$ be a finite semi-metric space, $0\leq p<\infty$ and $1\leq C<\infty$. Then $X$ has strict $p$-negative type with distortion $C$ if and only if $\Delta_{X}(p,C)>0$.
	\end{theorem}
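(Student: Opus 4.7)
The plan is to translate both conditions into statements about the continuous functional
\[
f(p,C,Q) = -C^{2}\sum_{q_{ij}<0}d_{X}(x_{i},x_{j})^{p}q_{ij} - \sum_{q_{ij}>0}d_{X}(x_{i},x_{j})^{p}q_{ij}
\]
that has already been introduced just before the theorem, and then use positive homogeneity of $f$ in $Q$ together with compactness of the normalised set $\mathcal{Q}=\{Q\in\mathcal{O}_{n}(\R):\pos(Q)=1\}$. The key preliminary observation I will establish is that for any nonzero $Q\in\mathcal{O}_{n}(\R)$ one has $\pos(Q)>0$. Indeed, since $Q$ is positive semidefinite its diagonal entries are non-negative, and a positive semidefinite matrix with zero diagonal must vanish; so some diagonal entry is strictly positive and contributes to $\pos(Q)$. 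This permits the rescaling $Q\mapsto Q/\pos(Q)\in\mathcal{Q}$ for every nonzero $Q\in\mathcal{O}_{n}(\R)$.

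Combined with the fact that $f(p,C,\lambda Q)=\lambda f(p,C,Q)$ for all $\lambda>0$, this rescaling lets me rephrase the condition that $X$ has strict $p$-negative type with distortion $C$ (that is, $f(p,C,Q)>0$ for all nonzero $Q\in\mathcal{O}_{n}(\R)$) as the equivalent condition that $f(p,C,Q)>0$ for every $Q\in\mathcal{Q}$.

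For the direction ($\Delta_{X}(p,C)>0 \Rightarrow$ strict $p$-negative type with distortion $C$), I would take any nonzero $Q\in\mathcal{O}_{n}(\R)$, write $Q=\pos(Q)\cdot Q'$ with $Q'\in\mathcal{Q}$, and deduce $f(p,C,Q)=\pos(Q)\cdot f(p,C,Q')\geq\pos(Q)\cdot\Delta_{X}(p,C)>0$, which is exactly the strict form of the defining inequality. For the converse, assume $X$ has strict $p$-negative type with distortion $C$. Then $f(p,C,Q)>0$ for every $Q\in\mathcal{Q}$ (since every such $Q$ is nonzero). The paragraph preceding the theorem records that $\mathcal{Q}$ is compact and that $Q\mapsto f(p,C,Q)$ is continuous, so the infimum defining $\Delta_{X}(p,C)$ is attained at some $Q^{*}\in\mathcal{Q}$, giving $\Delta_{X}(p,C)=f(p,C,Q^{*})>0$.

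The whole proof is compactness plus homogeneity once the rescaling is available, so there is no genuinely hard step; the only place things could plausibly go wrong is the rescaling itself, which would fail if a nonzero $Q\in\mathcal{O}_{n}(\R)$ could have $\pos(Q)=0$. Ruling this out via the elementary fact about diagonals of positive semidefinite matrices is therefore the one point I would make sure to write down carefully.
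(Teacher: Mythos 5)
Your proof is correct and follows essentially the same route as the paper: rescale any nonzero $Q\in\mathcal{O}_{n}(\R)$ by $\pos(Q)^{-1}$ into the normalised set $\mathcal{Q}$ for one direction, and use compactness of $\mathcal{Q}$ plus continuity of $f$ to see that the infimum is attained for the other. The only difference is that you explicitly justify $\pos(Q)>0$ for nonzero $Q$ via the diagonal of a positive semidefinite matrix, a point the paper asserts without proof; this is a worthwhile addition but not a change of method.
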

	\begin{proof}
		First suppose that $\Delta_{X}(p,C)>0$. So, take $Q=(q_{ij})_{i,j=1}^{n}\in\mathcal{O}_{n}(\R)$ with $Q\neq 0$. Then, since $\pos(Q)\neq 0$ one may define $R=(r_{ij})_{i,j=1}^{n}$ by $R=\pos(Q)^{-1}Q$. Of course, one then has that $R\in\mathcal{O}_{n}(\R)$ with $\pos(R)=\pos(Q)\pos(Q)^{-1}=1$. Hence
		\begin{align*}
			&-\frac{1}{\pos(Q)}\bigg(\sum_{q_{ij}>0}d_{X}(x_{i},x_{j})^{p}q_{ij}+C^{2}\sum_{q_{ij}<0}d_{X}(x_{i},x_{j})^{p}q_{ij}\bigg)
			\\&\quad\quad=-C^{2}\sum_{r_{ij}<0}d_{X}(x_{i},x_{j})^{p}r_{ij}-\sum_{r_{ij}>0}d_{X}(x_{i},x_{j})^{p}r_{ij}
			\\&\quad\quad\geq\Delta_{X}(p,C)
			\\&\quad\quad>0.
		\end{align*}
		Hence, after dividing both sides by $-\pos(Q)^{-1}$ one finds that
		\[ \sum_{q_{ij}>0}d_{X}(x_{i},x_{j})^{p}q_{ij}+C^{2}\sum_{q_{ij}<0}d_{X}(x_{i},x_{j})^{p}q_{ij}<0
		\]
		which shows that $X$ has strict $p$-negative type with distortion $C$. Conversely, suppose that $X$ has strict $p$-negative type with distortion $C$. Let us keep the notation from below Definition \ref{Delta Definition} so that
		\[ \Delta_{X}(p,C)=\inf_{Q\in\mathcal{Q}} f(p,C,Q).
		\]
		Since $f$ is continuous and $\mathcal{Q}$ is compact this infimum must be attained. That is, there exists some $Q_{0}\in\mathcal{Q}$ (that depends on both $p$ and $C$) such that
		\[ \Delta_{X}(p,C)=\inf_{Q\in\mathcal{Q}} f(p,C,Q)=f(p,C,Q_{0}).
		\]
		But since $X$ has strict $p$-negative type with distortion $C$ one has that $f(p,C,Q_{0})>0$ and hence by the above equation we conclude that $\Delta_{X}(p,C)>0$ as required.
	\end{proof}
	
	Combining this with Corollary \ref{Continuity Results} gives the following result pertaining to strict distorted $p$-negative type.
	
	\begin{corollary}\label{Small Extension}
		Let $(X,d_{X})$ be a finite semi-metric space, $0\leq p<\infty$ and $1\leq C<\infty$. Suppose that $X$ has strict $p$-negative type with distortion $C$. Then there exists some $\zeta>0$ (that depends on both $p$ and $C$) such that $X$ has strict $q$-negative type with distortion $K$ for all $q\in[p,p+\zeta]$ and $K\in[\max(C-\zeta,1),C]$.
	\end{corollary}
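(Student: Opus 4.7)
The plan is to deduce the corollary from Theorem \ref{Correct Delta} together with the continuity of the distorted type gap function provided by Corollary \ref{Continuity Results}. First I would translate the hypothesis into a statement about $\Delta_X$: by Theorem \ref{Correct Delta}, the assumption that $X$ has strict $p$-negative type with distortion $C$ is equivalent to the strict inequality $\Delta_X(p,C)>0$. Corollary \ref{Continuity Results} then guarantees that $\Delta_X$ is continuous on $[0,\infty)\times[1,\infty)$, so the preimage $U:=\Delta_X^{-1}((0,\infty))$ is an open subset of this product space that contains the point $(p,C)$.

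Next I would exploit this openness to produce the desired $\zeta$. Since $U$ is open and contains $(p,C)$, there exists $\zeta>0$ (depending on $(p,C)$) such that every pair $(q,K)\in[0,\infty)\times[1,\infty)$ with $|q-p|\leq\zeta$ and $|K-C|\leq\zeta$ lies in $U$. Any pair with $q\in[p,p+\zeta]$ and $K\in[\max(C-\zeta,1),C]$ automatically satisfies those two bounds, so $\Delta_X(q,K)>0$ for all such $(q,K)$. Finally, Theorem \ref{Correct Delta} applied in the converse direction converts this positivity back into the statement that $X$ has strict $q$-negative type with distortion $K$, which is the conclusion.

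I do not anticipate any genuine obstacle, since the argument is a standard continuity/openness bootstrap once Theorem \ref{Correct Delta} and Corollary \ref{Continuity Results} are in hand. The only minor bookkeeping point is that one must truncate the lower endpoint of the $K$-interval at $1$ (hence the $\max$ in the statement) in order to respect the constraint $K\geq1$ implicit in the notion of distortion; this is harmless because the openness of $U$ is required only relative to the domain $[0,\infty)\times[1,\infty)$.
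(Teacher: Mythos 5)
Your argument is correct and is precisely the one the paper intends: the paper gives no separate proof of Corollary \ref{Small Extension}, simply asserting that it follows by combining Theorem \ref{Correct Delta} with the continuity of $\Delta_{X}$ from Corollary \ref{Continuity Results}, which is exactly your openness-of-$\Delta_{X}^{-1}((0,\infty))$ bootstrap. Your remark about truncating the $K$-interval at $1$ so that openness is only needed relative to $[0,\infty)\times[1,\infty)$ is the right bookkeeping point.
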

	
	Thus we are now able to obtain a generalisation of the fact that all finite semi-metric spaces admit a non-trivial non-distorted $\wp_{X}$-polygonal equality (recall that $\wp_{X}$ is used to denote the supremal $p$-negative type of $X$).
	
	\begin{corollary}\label{Existence Of Poly Eq}
		Let $(X,d_{X})$ be a finite semi-metric space and $p\geq\wp_{X}$. Then there exists a non-trivial $c_{2}(X,d_{X}^{p/2})$-distorted $p$-polygonal equality in $X$.
	\end{corollary}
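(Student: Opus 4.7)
My plan is to set $C_{0}=c_{2}(X,d_{X}^{p/2})$ and show that the distorted type gap satisfies $\Delta_{X}(p,C_{0})=0$. The desired non-trivial polygonal equality will then drop out of the observation (recorded just beneath Definition \ref{Delta Definition}) that the infimum defining $\Delta_{X}$ is attained on the compact set $\mathcal{Q}$ of matrices in $\mathcal{O}_{n}(\R)$ with $\pos=1$.

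The easy half is $\Delta_{X}(p,C_{0})\geq 0$. Since $X$ is finite, the Euclidean distortion $C_{0}$ is itself attained as a maximum via Corollary \ref{Euclidean Distortion Formula}, so $(X,d_{X}^{p/2})$ embeds into some Hilbert space with distortion exactly $C_{0}$. Theorem \ref{Finite Distorted Embedding Theorem} then yields that $X$ has $p$-negative type with distortion $C_{0}$, which is precisely $\Delta_{X}(p,C_{0})\geq 0$.

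The reverse inequality $\Delta_{X}(p,C_{0})\leq 0$ is where the work lies, and I would argue it by contradiction. Assume $\Delta_{X}(p,C_{0})>0$. By Theorem \ref{Correct Delta} this gives strict $p$-negative type with distortion $C_{0}$, so Corollary \ref{Small Extension} produces a $\zeta>0$ allowing me to perturb either in the $q$-direction or the $C$-direction. I expect the main obstacle here to be the boundary split on whether $C_{0}>1$ or $C_{0}=1$. In the generic case $C_{0}>1$ I would take $\zeta$ small enough to ensure $K:=C_{0}-\zeta>1$ and use Corollary \ref{Small Extension} at $q=p$, $K$ to obtain (ordinary) $p$-negative type with distortion $K<C_{0}$; Theorem \ref{Finite Distorted Embedding Theorem} then contradicts the minimality of $C_{0}$ as $c_{2}(X,d_{X}^{p/2})$. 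The degenerate case $C_{0}=1$ cannot shrink $C$, but there $X$ must in fact have (ordinary) $p$-negative type, and combined with $p\geq\wp_{X}$ this forces $p=\wp_{X}$; I would then apply Corollary \ref{Small Extension} at $K=1$, $q=p+\zeta/2$, extracting strict $q$-negative type with $q>\wp_{X}$, which contradicts the supremal definition of $\wp_{X}$.

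Having established $\Delta_{X}(p,C_{0})=0$, I invoke the continuity/compactness remark beneath Definition \ref{Delta Definition} to obtain a $Q_{0}\in\mathcal{Q}$ attaining the infimum. Since $\pos(Q_{0})=1$ forces $Q_{0}\neq 0$, unwinding the definition of $\Delta_{X}$ yields
\[
\sum_{q_{ij}>0}d_{X}(x_{i},x_{j})^{p}(Q_{0})_{ij}+C_{0}^{2}\sum_{q_{ij}<0}d_{X}(x_{i},x_{j})^{p}(Q_{0})_{ij}=0,
\]
which is exactly a non-trivial $c_{2}(X,d_{X}^{p/2})$-distorted $p$-polygonal equality in $X$, completing the proof.
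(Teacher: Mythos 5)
Your proof is correct and follows the same skeleton as the paper's: set $C_{0}=c_{2}(X,d_{X}^{p/2})$, reduce everything to showing $\Delta_{X}(p,C_{0})=0$, obtain $\Delta_{X}(p,C_{0})\geq 0$ from the embedding characterisation, rule out $\Delta_{X}(p,C_{0})>0$ by combining Theorem \ref{Correct Delta} with the perturbation result Corollary \ref{Small Extension}, and finally extract $Q_{0}$ from the attainment of the infimum on the compact set $\mathcal{Q}$. The one genuine divergence is in how the contradiction is reached. The paper perturbs in the exponent: strictness at $(p,C_{0})$ yields strict $(p+\zeta)$-negative type with the \emph{same} distortion $C_{0}$, hence $c_{2}(X,d_{X}^{(p+\zeta)/2})\leq c_{2}(X,d_{X}^{p/2})$, contradicting the strict monotonicity of $r\mapsto c_{2}(X,d_{X}^{r/2})$ on $[\wp_{X},\infty)$, which the paper extracts from Theorem \ref{Strict Nesting}. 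You instead perturb the distortion parameter downward when $C_{0}>1$, contradicting the minimality of $C_{0}$ directly, and you handle the boundary case $C_{0}=1$ separately: there $p\geq\wp_{X}$ together with ordinary $p$-negative type forces $p=\wp_{X}$, and a perturbation in $p$ at fixed distortion $1$ then contradicts the supremality of $\wp_{X}$. Both arguments are valid and both ultimately rest on the fact that the Euclidean distortion of a finite semi-metric space is attained (Theorem \ref{Linial London Rabinovich} together with Corollary \ref{Euclidean Distortion Formula}); your version trades the monotonicity claim (which the paper asserts with only a pointer to Theorem \ref{Strict Nesting} and which itself needs the observation that $c_{2}(X,d_{X}^{s/2})>1$ for $s>\wp_{X}$) for a two-case analysis, which is arguably a little more self-contained.
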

	\begin{proof}
		For ease of notation, let us denote $c_{2}(X,d_{X}^{p/2})$ simply by $c_{2}(X^{p/2})$. By Theorem \ref{Infinte Distorted Embedding Theorem} note that $X$ has $p$-negative type with distortion $c_{2}(X^{p/2})$, for all $p\geq 0$. In particular this means that $\Delta_{X}(p,c_{2}(X^{p/2}))\geq 0$, for all $p\geq 0$. So, take $p\geq\wp_{X}$ and let us assume for a contradiction that $\Delta_{X}(p,c_{2}(X^{p/2}))>0$. Then by Theorem \ref{Correct Delta} this means that $X$ has strict $p$-negative type with distortion $c_{2}(X^{p/2})$ and hence by Corollary \ref{Small Extension} there exists some $\zeta>0$ such that $X$ also has strict $(p+\zeta)$-negative type with distortion $c_{2}(X^{p/2})$. By Theorem \ref{Infinte Distorted Embedding Theorem} this means that $(X,d_{X}^{(p+\zeta)/2})$ embeds in $\ell^{2}$ with distortion at most $c_{2}(X^{p/2})$ and hence $c_{2}(X^{(p+\zeta)/2})\leq c_{2}(X^{p/2})$. But this is impossible since the function $r\mapsto c_{2}(X^{r/2})$ is strictly increasing for $r\geq\wp_{X}$ (see Theorem \ref{Strict Nesting}). Hence it must be the case that $\Delta_{X}(p,c_{2}(X^{p/2}))=0$.
		
		Now, arguing as in the proof of Theorem \ref{Correct Delta} (and keeping the notation from that proof) there must exist some $Q_{0}\in\mathcal{Q}$ such that
		\[ f(p,c_{2}(X^{p/2}),Q_{0})=\Delta_{X}(p,c_{2}(X^{p/2}))=0.
		\]
		But this is just another way of saying that $Q_{0}$ is a $c_{2}(X^{p/2})$-distorted $p$-polygonal equality in $X$. Also, since $Q_{0}\in\mathcal{Q}$ we have that $\pos(Q_{0})=1$ and hence $Q_{0}\neq 0$. Hence $Q_{0}$ is a non-trivial $c_{2}(X^{p/2})$-distorted $p$-polygonal equality in $X$ and so we are done.
	\end{proof}
	
	The above corollary enables us to classify those $p$ and $C$ for which a finite semi-metric space $X$ has strict $p$-negative type with distortion $C$.
	
	\begin{corollary}\label{Distorted Strict Characterisation}
		Let $(X,d_{X})$ be a finite semi-metric space, $0\leq p<\infty$ and $C\geq 1$. Then $X$ has strict $p$-negative type with distortion $C$ if and only if $p<\wp_{X}$ or $C>c_{2}(X,d_{X}^{p/2})$.
	\end{corollary}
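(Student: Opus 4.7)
The plan is to prove the two directions separately, with the forward direction (``only if'') leaning heavily on Corollary \ref{Existence Of Poly Eq} and the reverse direction (``if'') following from the already-established nesting results. I would break the reverse direction into the two cases coming from the disjunction $p<\wp_X$ or $C>c_2(X,d_X^{p/2})$, and handle the forward direction by contrapositive.

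For the reverse direction, first suppose $p<\wp_X$. Since $X$ is finite and (as noted after Theorem \ref{Basic Negative Type Results}) the set of values for which $X$ has $p$-negative type is of the form $[0,\wp_X]$ or $[0,\infty)$, the space $X$ has $\wp_X$-negative type (or, if $\wp_X=\infty$, $r$-negative type for every $r$), so by Theorem \ref{Basic Negative Type Results}(2) $X$ has strict $p$-negative type in the usual sense. Proposition \ref{C=1 Equivalence}(2) then gives strict $p$-negative type with distortion $1$, which handles $C=1$, and Proposition \ref{C Nesting} bumps this up to strict $p$-negative type with distortion $C$ for every $C>1$. For the other case, if $C>c_2(X,d_X^{p/2})$, then Theorem \ref{Infinte Distorted Embedding Theorem} together with the definition of Euclidean distortion gives that $X$ has $p$-negative type with distortion $c_2(X,d_X^{p/2})$, and a single application of Proposition \ref{C Nesting} (with $C_1=c_2(X,d_X^{p/2})<C=C_2$) promotes this to strict $p$-negative type with distortion $C$.

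For the forward direction I would argue the contrapositive: assume $p\geq\wp_X$ and $C\leq c_2(X,d_X^{p/2})$, and show $X$ fails to have strict $p$-negative type with distortion $C$. Split into two subcases. If $C<c_2(X,d_X^{p/2})$, then by Theorem \ref{Infinte Distorted Embedding Theorem} $(X,d_X^{p/2})$ does not admit a bi-lipschitz embedding into a Hilbert space with distortion at most $C$, hence $X$ does not even have $p$-negative type with distortion $C$, let alone the strict version. If $C=c_2(X,d_X^{p/2})$, then since $p\geq\wp_X$ we may invoke Corollary \ref{Existence Of Poly Eq} to produce a non-trivial $c_2(X,d_X^{p/2})$-distorted $p$-polygonal equality in $X$; by the observation immediately following Definition \ref{Poly Eq Def}, the existence of such a non-trivial polygonal equality is precisely the obstruction to $X$ having strict $p$-negative type with distortion $C$.

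The main obstacle here has already been taken care of in Corollary \ref{Existence Of Poly Eq}: producing the non-trivial polygonal equality at the critical distortion $c_2(X,d_X^{p/2})$ is the only delicate point, and once it is in hand the present corollary is essentially an assembly of Theorem \ref{Infinte Distorted Embedding Theorem}, Proposition \ref{C=1 Equivalence}, Proposition \ref{C Nesting} and Theorem \ref{Basic Negative Type Results}(2). The only minor care needed is to ensure the edge cases $C=1$ and $\wp_X=\infty$ are covered, both of which are handled transparently by the arguments above.
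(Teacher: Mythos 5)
Your proposal is correct and follows essentially the same route as the paper: the reverse direction is assembled from the nesting results (the paper uses Theorem \ref{Strict Nesting} at distortion $1$ where you use Theorem \ref{Basic Negative Type Results}(2) plus Proposition \ref{C=1 Equivalence}, which amounts to the same thing) together with Proposition \ref{C Nesting}, and the forward direction is the paper's argument stated in contrapositive form, resting on Corollary \ref{Existence Of Poly Eq}. Your explicit handling of the edge cases $C=1$ and $\wp_X=\infty$ is slightly more careful than the paper's, but there is no substantive difference.
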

	\begin{proof}
		First suppose that $p<\wp_{X}$. By the definition of $\wp_{X}$ we have that $X$ has $\wp_{X}$-negative type with distortion $1$ and hence by Theorem \ref{Strict Nesting} $X$ also has strict $p$-negative type with distortion $1$. But then by Proposition \ref{C Nesting} we also have that $X$ has strict $p$-negative type with distortion $C$. Now, suppose instead that $C>c_{2}(X,d_{X}^{p/2})$. Then Theorem \ref{Infinte Distorted Embedding Theorem} gives that $X$ has $p$-negative type with distortion $c_{2}(X,d_{X}^{p/2})$ and so Proposition \ref{C Nesting} again shows that $X$ has strict $p$-negative type with distortion $C$.
		
		Conversely, suppose that $X$ has strict $p$-negative type with distortion $C$ and that $p\geq\wp_{X}$. Then Theorem \ref{Infinte Distorted Embedding Theorem} implies that $C\geq c_{2}(X,d_{X}^{p/2})$. But by Corollary \ref{Existence Of Poly Eq} we know that $X$ has nonstrict $p$-negative type with distortion $c_{2}(X,d_{X}^{p/2})$ and so it must be the case that $C>c_{2}(X,d_{X}^{p/2})$.
	\end{proof}
	
	\section{Examples} \label{Examples}
	
	In this section we provide two examples of semi-metric spaces and their values of (strict) distorted $p$-negative type, as well as some of their non-trivial polygonal equalities. The first example we look at is that of the bipartite graph $K_{m,n}$, where the results here are entirely new. For the second example, we make use of results from Linial and Magen \cite{Linial 1} to compute the values of distorted $p$-negative type of the Hamming cube $H_{n}$.
	
	Throughout this section we will use the following standard notation when computing the distortion of a given embedding.
	
	\begin{definition}\label{Expansion Contraction Definition}
		Let $(X,d_{X})$ and $(Y,d_{Y})$ be semi-metric spaces, and suppose that $f:X\rightarrow Y$.
		\begin{enumerate}
			\item The \textit{contraction} of $f$ is defined to be
			\[ \cont(f)=\sup_{\substack{x,y\in X \\ x\neq y}}\frac{d_{X}(x,y)}{d_{Y}(f(x),f(y))}.
			\]
			\item The \textit{expansion} of $f$ is defined to be
			\[ \expan(f)=\sup_{\substack{x,y\in X \\ x\neq y}}\frac{d_{Y}(f(x),f(y))}{d_{X}(x,y)}.
			\]
		\end{enumerate}
	\end{definition}
	
	It is a simple matter to check that using this notation one has that if $(X,d_{X})$ and $(Y,d_{Y})$ are semi-metric spaces and $f:X\rightarrow Y$ is a bi-lipschitz embedding then the distortion of $f$ is given by
	\[ \dist(f)=\cont(f)\times\expan(f).
	\]
	
	\subsection{The Bipartite Graph $K_{m,n}$}
	
	The first example that we will concern ourselves with will be the bipartite graph $K_{m,n}$ equipped with its graph metric. When we say let $(X,d_{X})$ be the bipartite graph $K_{m,n}$ we mean that $X$ is the space $X=\{u_{1},\dots,u_{m},v_{1},\dots,v_{n}\}$ with metric $d_{X}$ defined by $d_{X}(u_{i},u_{j})=d_{X}(v_{k},v_{l})=2$, for all $1\leq i,j\leq m$ and $1\leq k,l\leq n$ with $i\neq j$ and $k\neq l$, and also $d_{X}(u_{i},v_{j})=1$ for all $1\leq i\leq m$ and $1\leq j\leq n$.
	
	To properly describe the optimal embeddings of powers of $K_{m,n}$ into Hilbert space we first need to understand how the complete graph $K_{n}$ can be isometrically embedded into Hilbert space. For what follows when we say let $(X,d_{X})$ be the complete graph $K_{n}$ we mean that $X=\{u_{1},\dots,u_{n}\}$ with metric $d_{X}$ such that $d_{X}(u_{i},u_{j})=1$, for all $1\leq i,j\leq n$ with $i\neq j$.
	
	It is a simple matter to construct an isometric embedding of $K_{n}$ into $\R^{n}$. Indeed, one may simply take $u_{i}\mapsto e_{i}/\sqrt{2}$ for all $1\leq i\leq n$ where $e_{1},\dots,e_{n}$ are the standard basis vectors. However, since $K_{n}$ is an $n$ point metric space it must therefore be possible to isometrically embed $K_{n}$ into\footnote{It can also be shown that $K_{n}$ cannot be isometrically embedded into $\R^{r}$ for any $r<n-1$. Indeed, since $K_{n}$ is an ultrametric space it has strict $2$-negative type and hence it must embed isometrically into $\R^{n-1}$ as an affinely independent set. For such results see \cite{Faver 1}.} $\R^{n-1}$. While the problem of writing an explicit formula for an isometric embedding of $K_{n}$ into $\R^{n-1}$ is not a conceptually challenging one it is rather tedious. Let
	\[ c_{n}=\frac{\sqrt{2}(1+\sqrt{n})}{2(n-1)}
	\]
	and 
	\[ C_{n}=\frac{1}{n}\bigg(c_{n}+\frac{1}{\sqrt{2}}\bigg)\mathbbm{1}
	\]
	where here $\mathbbm{1}$ denotes the vector in $\R^{n-1}$ all of whose entries are $1$. Then define $\phi:K_{n}\rightarrow\R^{n-1}$ by $\phi(u_{i})=e_{i}/\sqrt{2}-C_{n}$ for all $1\leq i\leq n-1$ and $\phi(u_{n})=c_{n}\mathbbm{1}-C_{n}$. It is a simple yet tedious task to check that $\phi$ is an isometric embedding of $K_{n}$ into $\R^{n-1}$. It is also simple to check that $\|\phi(u_{i})\|_{2}=(1-1/n)^{1/2}/2^{1/2}$ for all $1\leq i\leq n$. We will refer to this particular embedding as the standard embedding of $K_{n}$ into $\R^{n-1}$.
	
	We now move on to the problem of desribing optimal embeddings of powers of $K_{m,n}$. In proving the optimality of our embeddings we will require a matrix $Q$ that will serve as a distorted polygonal equality for $K_{m,n}$.
	
	\begin{lemma}\label{Kmn Poly Eq}
		Define $Q=(q_{ij})_{i,j=1}^{m+n}\in M_{m+n}(\R)$ by
		\[ q_{ij}=\begin{cases} \frac{1}{m^{2}},&\mbox{ if }1\leq i,j\leq m, \\ \frac{1}{n^{2}},&\mbox{ if }m+1\leq i,j\leq m+n, \\ -\frac{1}{mn},&\mbox{ if }1\leq i\leq m,m+1\leq j\leq m+n, \\ -\frac{1}{mn},&\mbox{ if }m+1\leq i\leq m+n,1\leq j\leq m.\end{cases}
		\]
		Then $Q\in\mathcal{O}_{m+n}(\R)$.
	\end{lemma}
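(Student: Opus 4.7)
My plan is to verify the three defining properties of $\mathcal{O}_{m+n}(\R)$ directly: symmetry of $Q$, positive semi-definiteness, and that $Q\mathbbm{1}=0$. Symmetry is immediate from the piecewise definition, since the value of $q_{ij}$ depends only on which of the two blocks the indices $i$ and $j$ lie in, and this assignment is manifestly symmetric under swapping $i \leftrightarrow j$.

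The key observation driving the proof is that $Q$ is a rank-one outer product. Specifically, I would define the vector $v \in \R^{m+n}$ by
\[ v_{i} = \begin{cases} 1/m, & 1 \leq i \leq m, \\ -1/n, & m+1 \leq i \leq m+n, \end{cases} \]
and check by a short case analysis on the four blocks that $v_{i}v_{j} = q_{ij}$ for every pair $(i,j)$. Hence $Q = vv^{T}$, from which positive semi-definiteness is automatic: for any $\xi \in \R^{m+n}$,
\[ \langle Q\xi, \xi\rangle = \langle v v^{T}\xi, \xi\rangle = \langle v^{T}\xi, v^{T}\xi\rangle = (v^{T}\xi)^{2} \geq 0. \]

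For the condition $Q\mathbbm{1} = 0$, the outer-product form gives $Q\mathbbm{1} = v(v^{T}\mathbbm{1})$, and the sum $v^{T}\mathbbm{1} = m \cdot (1/m) + n \cdot (-1/n) = 0$, so the conclusion is immediate. There is no real obstacle here; the whole lemma reduces to spotting the correct rank-one factorisation, after which everything is a one-line verification. (Alternatively, one could sum the entries of $Q$ along row $i$ directly: for $1 \leq i \leq m$ the row sum is $m \cdot (1/m^{2}) + n \cdot (-1/(mn)) = 0$, and similarly for $m+1 \leq i \leq m+n$, which also proves $Q\mathbbm{1}=0$ without invoking the factorisation.)
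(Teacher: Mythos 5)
Your proof is correct and follows essentially the same route as the paper: the paper also defines the vector with entries $1/m$ on the first $m$ coordinates and $-1/n$ on the last $n$, observes that $Q$ is its outer product with itself and that the entries sum to zero, and then cites the rank-one argument from Proposition \ref{C=1 Equivalence} for membership in $\mathcal{O}_{m+n}(\R)$. You merely spell out that last verification explicitly rather than citing it.
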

	\begin{proof}
		Define $\xi_{1},\dots,\xi_{m+n}\in\R$ by
		\[ \xi_{i}=\begin{cases} \frac{1}{m},&\mbox{ if }1\leq i\leq m, \\ -\frac{1}{n},&\mbox{ if }m+1\leq i\leq m+n.\end{cases}
		\]
		Then $\sum_{i=1}^{m+n}\xi_{i}=0$ and $Q=(\xi_{i}\xi_{j})_{i,j=1}^{m+n}$. As in the proof of Proposition \ref{C=1 Equivalence}, it now follows that $Q\in\mathcal{O}_{m+n}(\R)$.
	\end{proof}
	
	In what follows if $r\geq 1$ then we shall use $0_{r}$ to denote the zero vector in $\R^{r}$ (i.e. the vector in $\R^{r}$ all of whose coordinates are zero). Also, for ease of notation we shall set
	\[ \wp_{m,n}=\log_{2}\bigg(\frac{2mn}{2mn-m-n}\bigg).
	\]
	
	\begin{theorem}\label{Kmn Embedding}
		Let $(X,d_{X})$ be the bipartite graph $K_{m,n}$, where $m,n\geq 1$ (not both $1$) and let $\wp=\wp_{m,n}$. Also, let $x_{1},\dots,x_{m}\in\R^{m-1}$ be the image of the standard embedding of $K_{m}$ into $\R^{m-1}$ and let $y_{1},\dots,y_{n}\in\R^{n-1}$ be the image of the standard embedding of $K_{n}$ into $\R^{n-1}$ (see the comments above Lemma \ref{Kmn Poly Eq}). Then for $p\geq\wp$ the map $\phi:(X,d_{X}^{p/2})\rightarrow\R^{m-1}\oplus\R^{n-1}=\R^{m+n-2}$ defined by
		\begin{align*}
			\phi(u_{i})&=(x_{i},0_{n-1}),\,\forall 1\leq i\leq m,
			\\\phi(v_{j})&=(0_{m-1},y_{j}),\,\forall 1\leq j\leq n,
		\end{align*}
		has $\dist(\phi)=c_{2}(X,d_{X}^{p/2})$. Consequently $\wp_{X}=\wp$ and
		\[ c_{2}(X,d_{X}^{p/2})=\begin{cases} 1,&\mbox{ if }0\leq p\leq\wp, \\ 2^{p/2}\bigg(1-\frac{1}{2}\bigg(\frac{1}{m}+\frac{1}{n}\bigg)\bigg)^{1/2},&\mbox{ if }\wp\leq p<\infty.\end{cases}
		\]
	\end{theorem}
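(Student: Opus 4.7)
The plan is to treat three claims in sequence: first compute $\dist(\phi)$ for $p\ge\wp$, then pin down $\wp_{X}$, and finally read off the Euclidean distortion formula.

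\emph{Step 1: distances under $\phi$.} Using the orthogonality of the two summands of $\R^{m-1}\oplus\R^{n-1}$, together with the facts recorded above (the standard embedding of $K_{r}$ has unit pairwise distances and vertex norms $\frac{1}{\sqrt{2}}(1-1/r)^{1/2}$), I would compute
\[ \|\phi(u_{i})-\phi(u_{j})\|_{2}=\|\phi(v_{k})-\phi(v_{l})\|_{2}=1 \]
for distinct indices, while
\[ \|\phi(u_{i})-\phi(v_{j})\|_{2}=\bigl(1-\tfrac{1}{2}(\tfrac{1}{m}+\tfrac{1}{n})\bigr)^{1/2}. \]
Hence the ratio $\|\phi(x)-\phi(y)\|_{2}/d_{X}(x,y)^{p/2}$ takes only two values: $2^{-p/2}$ on within-side pairs and $(1-\tfrac{1}{2}(1/m+1/n))^{1/2}$ on cross pairs. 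The definition of $\wp$ is precisely the equation $2^{-\wp}=1-\tfrac{1}{2}(1/m+1/n)$, so for $p\ge\wp$ the within-side ratio is the smaller one. Therefore $\expan(\phi)=(1-\tfrac{1}{2}(1/m+1/n))^{1/2}$, $\cont(\phi)=2^{p/2}$, and
\[ \dist(\phi)=\cont(\phi)\cdot\expan(\phi)=2^{p/2}\bigl(1-\tfrac{1}{2}(\tfrac{1}{m}+\tfrac{1}{n})\bigr)^{1/2}. \]

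\emph{Step 2: matching lower bound via the polygonal equality.} I would feed the matrix $Q$ from Lemma \ref{Kmn Poly Eq} into the Euclidean distortion formula of Corollary \ref{Euclidean Distortion Formula}. The diagonal entries drop out because $d_{X}(x_{i},x_{i})=0$; the positive off-diagonal contributions split into two blocks (sizes $m(m-1)$ and $n(n-1)$, weights $1/m^{2}$ and $1/n^{2}$, common distance $2$); and the $2mn$ negative entries all have distance $1$ and weight $-1/(mn)$. A short count gives numerator $2^{p+1}(1-\tfrac{1}{2}(1/m+1/n))$ and denominator $2$, whence
\[ c_{2}(X,d_{X}^{p/2})^{2}\ge 2^{p}\bigl(1-\tfrac{1}{2}(\tfrac{1}{m}+\tfrac{1}{n})\bigr)=\dist(\phi)^{2}. \]
This forces equality, so $c_{2}(X,d_{X}^{p/2})=\dist(\phi)$ and the displayed formula holds for all $p\ge\wp$.

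\emph{Step 3: supremal negative type and the range $0\le p\le\wp$.} Writing $Q=(\xi_{i}\xi_{j})$ with $\xi_{i}=1/m$ on $u$-vertices and $-1/n$ on $v$-vertices, the same bookkeeping yields
\[ \sum_{i,j=1}^{m+n}d_{X}(x_{i},x_{j})^{p}\xi_{i}\xi_{j}=2\bigl(2^{p}(1-\tfrac{1}{2}(\tfrac{1}{m}+\tfrac{1}{n}))-1\bigr), \]
which is strictly positive for $p>\wp$. Since $\sum_{i}\xi_{i}=0$ this precludes $p$-negative type, giving $\wp_{X}\le\wp$. Conversely, at $p=\wp$ the conclusion of Step 1 specialises to $\dist(\phi)=1$, so $\phi$ is an isometric embedding of $(X,d_{X}^{\wp/2})$ into Euclidean space; Theorem \ref{Basic Negative Type Results}(1) then yields that $X$ has $\wp$-negative type, and part (2) of the same theorem gives $\wp_{X}\ge\wp$. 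Hence $\wp_{X}=\wp$, and a final application of Theorem \ref{Basic Negative Type Results}(1) shows $c_{2}(X,d_{X}^{p/2})=1$ for all $0\le p\le\wp$.

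The only genuinely delicate point is the arithmetic in Step 2: one must carefully account for the vanishing diagonal, the double-counting of the $2mn$ bipartite entries, and the two differently-sized diagonal blocks. Once the numerator and denominator are computed correctly, everything else is forced by matching the upper bound from $\phi$ and the identity $2^{\wp}(1-\tfrac{1}{2}(1/m+1/n))=1$.
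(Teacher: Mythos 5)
Your proposal is correct and follows essentially the same route as the paper: the same computation of $\expan(\phi)$ and $\cont(\phi)$ via the orthogonality of the two standard-embedding blocks, and the same matching lower bound obtained by feeding the rank-one matrix $Q$ of Lemma \ref{Kmn Poly Eq} into Corollary \ref{Euclidean Distortion Formula}, with the arithmetic in your Step 2 checking out. Your Step 3 is a slightly more explicit endgame (arguing $\wp_{X}=\wp$ directly from the sign of the classical negative-type sum and Schoenberg's theorem, where the paper invokes Theorem \ref{Strict Nesting} to get $c_{2}(X,d_{X}^{p/2})=1$ for $p<\wp$), but this is a cosmetic variation rather than a different proof.
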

	\begin{proof}
		It follows immediately from the definition of $\phi$ that
		\[ \|\phi(u_{i})-\phi(u_{j})\|_{2}=1=\|\phi(v_{k})-\phi(v_{l})\|_{2}
		\]
		for all $1\leq i\neq j\leq m$, $1\leq k\neq l\leq n$. Now take $1\leq i\leq m$ and $1\leq j\leq n$. By what was said above about the standard embeddings of $K_{r}$ into $\R^{r-1}$ we have that $\|\phi(u_{i})\|_{2}=(1-1/m)^{1/2}/2^{1/2}$ and $\|\phi(v_{j})\|_{2}=(1-1/n)^{1/2}/2^{1/2}$. Also, since $\phi(u_{i})$ and $\phi(v_{j})$ are clearly orthogonal we have that
		\begin{align*}
			\|\phi(u_{i})-\phi(v_{j})\|_{2}^{2}&=\|\phi(u_{i})\|_{2}^{2}+\|\phi(v_{j})\|_{2}^{2}
			\\&=\frac{1}{2}\bigg(1-\frac{1}{m}\bigg)+\frac{1}{2}\bigg(1-\frac{1}{n}\bigg)
			\\&=1-\frac{1}{2}\bigg(\frac{1}{m}+\frac{1}{n}\bigg).
		\end{align*}
		Hence, since we are we are thinking of $\phi$ as a map $\phi:(X,d_{X}^{p/2})\rightarrow\R^{m+n-2}$, one has that
		\begin{align*}
			\expan(\phi)&=\sup_{\substack{x,y\in X \\ x\neq y}}\frac{\|\phi(x)-\phi(y)\|_{2}}{d_{X}(x,y)^{p/2}}
			\\&=\max\bigg(\frac{1}{2^{p/2}},\bigg(1-\frac{1}{2}\bigg(\frac{1}{m}+\frac{1}{n}\bigg)\bigg)^{1/2}\bigg).
		\end{align*}
		Since $p\geq\wp=\wp_{m,n}$ it is readily checked that
		\[ \expan(\phi)=\bigg(1-\frac{1}{2}\bigg(\frac{1}{m}+\frac{1}{n}\bigg)\bigg)^{1/2}.
		\]
		Similarly, since $p\geq\wp=\wp_{m,n}$ the contraction of $\phi$ is given by
		\begin{align*}
			\cont(\phi)&=\sup_{\substack{x,y\in X \\ x\neq y}}\frac{d_{X}(x,y)^{p/2}}{\|\phi(x)-\phi(y)\|_{2}}
			\\&=\max\bigg(2^{p/2},\bigg(1-\frac{1}{2}\bigg(\frac{1}{m}+\frac{1}{n}\bigg)\bigg)^{-1/2}\bigg)
			\\&=2^{p/2}.
		\end{align*}
		Hence the distortion of $\phi:(X,d_{X}^{p/2})\rightarrow\R^{m+n-2}$ is
		\[ \dist(\phi)=\cont(\phi)\times\expan(\phi)=2^{p/2}\bigg(1-\frac{1}{2}\bigg(\frac{1}{m}+\frac{1}{n}\bigg)\bigg)^{1/2}.
		\]
		For the lower bound, let $Q=(q_{ij})_{i,j=1}^{m+n}$ be the matrix defined in Lemma \ref{Kmn Poly Eq}, which we know is in $\mathcal{O}_{m+n}(\R)$. Then taking $z_{1}=u_{1},\dots,z_{m}=u_{m},z_{m+1}=v_{1},\dots,z_{m+n}=v_{n}$, Theorem \ref{Euclidean Distortion Formula} gives that
		
		\begin{align*}
			c_{2}(X,d_{X}^{p/2})^{2}&\geq-\frac{\sum_{q_{ij}>0}d_{X}(z_{i},z_{j})^{p}q_{ij}}{\sum_{q_{ij}<0}d_{X}(z_{i},z_{j})^{p}q_{ij}}
			\\&=2^{p}\bigg(1-\frac{1}{2}\bigg(\frac{1}{m}+\frac{1}{n}\bigg)\bigg).
		\end{align*}
		Thus for $p\geq\wp$ one has that
		\[ c_{2}(X,d_{X}^{p/2})=2^{p/2}\bigg(1-\frac{1}{2}\bigg(\frac{1}{m}+\frac{1}{n}\bigg)\bigg)^{1/2}.
		\]
		Note in particular that $c_{2}(X,d_{X}^{\wp/2})=1$. The fact that $c_{2}(X,d_{X}^{p/2})=1$ for all $0\leq p<\wp$ now follows from Theorem \ref{Strict Nesting}.
	\end{proof}
	
	\begin{corollary}
		Let $(X,d_{X})$ be the bipartite graph $K_{m,n}$, where $m,n\geq 1$ (not both $1$), $\wp=\wp_{m,n}$, $0\leq p<\infty$ and $1\leq C<\infty$. Then $X$ has $p$-negative type with distortion $C$ if and only if $0\leq p\leq\wp$, or $p>\wp$ and
		\[ C\geq2^{p/2}\bigg(1-\frac{1}{2}\bigg(\frac{1}{m}+\frac{1}{n}\bigg)\bigg)^{1/2}.
		\]
	\end{corollary}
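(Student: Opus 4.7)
The plan is to deduce this corollary directly from the main structural result Theorem \ref{Kmn Embedding} together with the embedding characterization in Theorem \ref{Infinte Distorted Embedding Theorem}. The key observation is that by Theorem \ref{Infinte Distorted Embedding Theorem}, $X$ has $p$-negative type with distortion $C$ if and only if $(X,d_{X}^{p/2})$ admits a bi-lipschitz embedding into some Hilbert space with distortion at most $C$, which in turn is equivalent to $C \geq c_{2}(X,d_{X}^{p/2})$ (the infimum defining $c_{2}$ is attained for finite metric spaces, as witnessed either by the compactness argument used for $\Delta_{X}$, or simply by the explicit embedding constructed in Theorem \ref{Kmn Embedding}).

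Next I would split into the two cases appearing in the conclusion. In the range $0 \leq p \leq \wp$, Theorem \ref{Kmn Embedding} supplies $c_{2}(X,d_{X}^{p/2}) = 1$, so the condition $C \geq c_{2}(X,d_{X}^{p/2})$ reduces to $C \geq 1$, which holds by hypothesis; hence $X$ has $p$-negative type with distortion $C$ for every admissible $C$. In the complementary range $p > \wp$, Theorem \ref{Kmn Embedding} gives
\[
c_{2}(X,d_{X}^{p/2}) = 2^{p/2}\bigg(1 - \frac{1}{2}\bigg(\frac{1}{m} + \frac{1}{n}\bigg)\bigg)^{1/2},
\]
so the condition $C \geq c_{2}(X,d_{X}^{p/2})$ is precisely the inequality stated in the corollary.

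Assembling the two cases yields the claimed biconditional. There is no real obstacle here beyond assembling the pieces: the computation of $\wp_{X}$ and of $c_{2}(X,d_{X}^{p/2})$ was carried out in Theorem \ref{Kmn Embedding} (the hard direction being the matching lower bound delivered by the polygonal matrix from Lemma \ref{Kmn Poly Eq} via the Euclidean distortion formula in Corollary \ref{Euclidean Distortion Formula}), and the translation between distortion of embeddings and distorted $p$-negative type is exactly the content of Theorem \ref{Infinte Distorted Embedding Theorem}.
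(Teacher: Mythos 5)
Your proposal is correct and follows essentially the same route as the paper, which likewise deduces the corollary directly from Theorem \ref{Kmn Embedding} combined with Theorem \ref{Infinte Distorted Embedding Theorem}. Your extra remark that the infimum defining $c_{2}(X,d_{X}^{p/2})$ is attained (witnessed by the explicit embedding) is a worthwhile detail needed to pass from ``distortion at most $C$'' to the clean inequality $C\geq c_{2}(X,d_{X}^{p/2})$ in the boundary case, and the paper leaves it implicit.
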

	\begin{proof}
		This is a direct consequence of the above theorem and Theorem \ref{Infinte Distorted Embedding Theorem}.
	\end{proof}
	
	\begin{corollary}
		Let $(X,d_{X})$ be the bipartite graph $K_{m,n}$, where $m,n\geq 1$ (not both $1$), $\wp=\wp_{m,n}$, $0\leq p<\infty$ and $1\leq C<\infty$. Then $X$ has strict $p$-negative type with distortion $C$ if and only if $0\leq p<\wp$, or $p\geq\wp$ and
		\[ C>2^{p/2}\bigg(1-\frac{1}{2}\bigg(\frac{1}{m}+\frac{1}{n}\bigg)\bigg)^{1/2}.
		\]
	\end{corollary}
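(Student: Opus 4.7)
The plan is to derive this corollary as an immediate consequence of the general characterisation of strict distorted $p$-negative type for finite semi-metric spaces given in Corollary \ref{Distorted Strict Characterisation}, once the values $\wp_{X}$ and $c_{2}(X,d_{X}^{p/2})$ supplied by Theorem \ref{Kmn Embedding} are substituted in. Recall that Corollary \ref{Distorted Strict Characterisation} states that a finite semi-metric space $X$ has strict $p$-negative type with distortion $C$ precisely when either $p<\wp_{X}$, or $C>c_{2}(X,d_{X}^{p/2})$. For the bipartite graph, Theorem \ref{Kmn Embedding} gives $\wp_{X}=\wp_{m,n}$ and a piecewise formula for $c_{2}(X,d_{X}^{p/2})$, so the proof reduces to reading off the two cases $p<\wp$ and $p\geq\wp$.

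First, I would dispose of the range $0\leq p<\wp$: here the first disjunct of Corollary \ref{Distorted Strict Characterisation} is automatically satisfied, so $X$ has strict $p$-negative type with distortion $C$ for every $C\geq 1$, which matches the statement of the corollary. For the range $p\geq\wp$, I would invoke the second half of the piecewise formula from Theorem \ref{Kmn Embedding}, namely
\[ c_{2}(X,d_{X}^{p/2})=2^{p/2}\bigg(1-\frac{1}{2}\bigg(\frac{1}{m}+\frac{1}{n}\bigg)\bigg)^{1/2},
\]
so that $C>c_{2}(X,d_{X}^{p/2})$ is exactly the inequality appearing in the corollary. Combining the two cases gives the forward and reverse implications simultaneously.

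The only potential subtlety is the boundary value $p=\wp$, which belongs to both pieces of the definition of $c_{2}(X,d_{X}^{p/2})$. I would briefly verify that the two formulas coincide there: since $2^{\wp}=\tfrac{2mn}{2mn-m-n}$ by definition of $\wp_{m,n}$, one has
\[ 2^{\wp/2}\bigg(1-\frac{1}{2}\bigg(\frac{1}{m}+\frac{1}{n}\bigg)\bigg)^{1/2}=\sqrt{\frac{2mn}{2mn-m-n}}\cdot\sqrt{\frac{2mn-m-n}{2mn}}=1,
\]
so both expressions give $c_{2}(X,d_{X}^{\wp/2})=1$ and the condition $C>c_{2}(X,d_{X}^{\wp/2})$ reduces to $C>1$, consistent with the statement. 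No step here is a genuine obstacle; the work has already been done in Theorem \ref{Kmn Embedding} and Corollary \ref{Distorted Strict Characterisation}, and the present corollary is essentially a bookkeeping exercise assembling those two results.
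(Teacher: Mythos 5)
Your proposal is correct and takes exactly the route of the paper, which proves this corollary in one line as a direct consequence of Theorem \ref{Kmn Embedding} and Corollary \ref{Distorted Strict Characterisation}. Your additional check that the two pieces of the distortion formula agree at $p=\wp$ is a harmless (and correct) extra verification that the paper leaves implicit.
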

	\begin{proof}
		This is a direct consequence of Theorem \ref{Kmn Embedding} and Corollary \ref{Distorted Strict Characterisation}.
	\end{proof}
	
	\subsection{The Hamming Cube $H_{n}$}
	
	For our second example, we study the Hamming cube $H_{n}$. For what follows, when we say let $(X,d_{X})$ be the Hamming cube $H_{n}$ we mean that $X=\{0,1\}^{n}\subseteq\R^{n}$ with metric $d_{X}$ defined by $d_{X}(x,y)=\sum_{i=1}^{n}|x_{i}-y_{i}|$, for all $x=(x_{1},\dots,x_{n})^{T},y=(y_{1},\dots,y_{n})^{T}\in H_{n}$.
	
	Once again, we start by detailing our canditate for a polygonal equality for $H_{n}$. Here it will be easier to omit mention of any explicit ordering of the elements of $X=H_{n}$ and instead use the notation $Q=(q_{x,y})_{x,y\in X}\in M_{2^{n}}(\R)$.
	
	A proof of the following lemma can be found in \cite{Linial 1}.
	
	\begin{lemma}\label{Hamming Cube Nullity}
		Let $(X,d_{X})$ be the Hamming cube $H_{n}$, where $n\geq 2$, and define $Q=(q_{x,y})_{x,y\in X}$ by
		\[ q_{x,y}=\begin{cases} n-1,&\mbox{ if }x=y, \\ -1,&\mbox{ if }d_{X}(x,y)=1, \\ 1,&\mbox{ if }d_{X}(x,y)=n, \\ 0,&\mbox{ otherwise.}\end{cases}
		\]
		Then $Q\in\mathcal{O}_{2^{n}}(\R)$.
	\end{lemma}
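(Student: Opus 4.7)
The plan is to verify the three properties that define $\mathcal{O}_{2^{n}}(\R)$: symmetry, vanishing row sums, and positive semidefiniteness. The first two are immediate. Symmetry follows because $q_{x,y}$ depends only on $d_{X}(x,y)$. For $Q\mathbbm{1}=0$, observe that each row of $Q$ has exactly one diagonal entry equal to $n-1$, exactly $n$ entries equal to $-1$ (one for each of the $n$ Hamming-neighbors of the row index), and exactly one entry equal to $+1$ (the antipode), with all other entries zero; the row sum is therefore $(n-1)-n+1=0$.

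The substantive step is positive semidefiniteness, and the approach I would take is to exploit the abelian group structure on $X=(\Z/2\Z)^{n}$. The key observation is that $q_{x,y}=f(x\oplus y)$, where $\oplus$ is coordinatewise addition mod $2$ and $f:\{0,1\}^{n}\to\R$ is defined by $f(0)=n-1$, $f(z)=-1$ if $|z|=1$, $f(\mathbbm{1})=1$, and $f(z)=0$ otherwise (with $|z|$ the Hamming weight). Hence $Q$ is a convolution operator on the group $(\Z/2\Z)^{n}$, and so it is simultaneously diagonalised by the Walsh basis $\{\chi_{S}\}_{S\subseteq\{1,\dots,n\}}$, where $\chi_{S}(x)=(-1)^{\sum_{i\in S}x_{i}}$. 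A short computation of the form $(Q\chi_{S})(x)=\sum_{z}f(z)\chi_{S}(x\oplus z)=\chi_{S}(x)\hat{f}(S)$ shows that each $\chi_{S}$ is an eigenvector with eigenvalue $\hat{f}(S)=\sum_{z}f(z)\chi_{S}(z)$.

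It then remains to evaluate $\hat{f}(S)$ and check non-negativity. Writing $k=|S|$ and using $\chi_{S}(e_{i})=(-1)^{\mathbbm{1}[i\in S]}$ together with $\chi_{S}(\mathbbm{1})=(-1)^{k}$, one finds
\[
\hat{f}(S)=(n-1)-\sum_{i=1}^{n}(-1)^{\mathbbm{1}[i\in S]}+(-1)^{k}=(n-1)-(n-2k)+(-1)^{k}=2k-1+(-1)^{k},
\]
which equals $2k$ for $k$ even and $2k-2$ for $k$ odd, and is therefore non-negative in every case. This gives $Q\succeq 0$. As a consistency check, the unique zero eigenvalue at $k=0$ is attached to $\chi_{\emptyset}=\mathbbm{1}$, recovering $Q\mathbbm{1}=0$ from the spectral side. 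The only real conceptual step is recognising $Q$ as a group convolution so that the Walsh basis diagonalises it; after that, the verification reduces to the one-line Fourier evaluation above.
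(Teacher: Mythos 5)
Your proof is correct, and it is worth noting that the paper supplies no argument of its own for this lemma---it defers entirely to Linial and Magen \cite{Linial 1}---so your Walsh-character computation serves as a complete, self-contained verification along the same harmonic-analytic lines used there. The symmetry and row-sum checks are right, the convolution structure $q_{x,y}=f(x\oplus y)$ does force simultaneous diagonalisation by the characters $\chi_{S}$, and the eigenvalue $\hat{f}(S)=2k-1+(-1)^{k}$ (with $k=|S|$) is correctly evaluated and nonnegative in all cases, which together with $Q\mathbbm{1}=0$ gives $Q\in\mathcal{O}_{2^{n}}(\R)$.
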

	
	\begin{theorem}\label{Hamming Cube Embedding}
		Let $(X,d_{X})$ be the Hamming cube $H_{n}$, where $n\geq 2$. For $p\geq 1$ define the map $\phi:(X,d_{X}^{p/2})\rightarrow\R^{n}$ by $\phi(x)=x$. Then $\phi$ is an optimal distortion embedding. Consequently, one has that
		\[ c_{2}(X,d_{X}^{p/2})=\begin{cases} 1,&\mbox{ if }0\leq p\leq 1, \\ n^{(p-1)/2} &,\mbox{ if }1\leq p<\infty.\end{cases}
		\]
	\end{theorem}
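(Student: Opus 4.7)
The plan is to establish matching upper and lower bounds for $c_{2}(X,d_{X}^{p/2})$, relying on $\phi$ for the upper bound and the matrix $Q$ of Lemma \ref{Hamming Cube Nullity} for the lower bound, then handle the range $0\leq p\leq 1$ by a separate argument.

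First I would compute $\dist(\phi)$ explicitly. The key observation is that since the coordinates lie in $\{0,1\}$, $|x_{i}-y_{i}|^{2}=|x_{i}-y_{i}|$, so $\|\phi(x)-\phi(y)\|_{2}^{2}=\sum_{i=1}^{n}|x_{i}-y_{i}|=d_{X}(x,y)$. Thinking of $\phi$ as a map $(X,d_{X}^{p/2})\to\R^{n}$, the relevant ratio is $\|\phi(x)-\phi(y)\|_{2}/d_{X}(x,y)^{p/2}=d_{X}(x,y)^{(1-p)/2}$. Since $d_{X}(x,y)$ ranges over $\{1,2,\dots,n\}$, for $p\geq 1$ this ratio attains its supremum $1$ at $d_{X}=1$ and its infimum $n^{(1-p)/2}$ at $d_{X}=n$. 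Therefore $\expan(\phi)=1$ and $\cont(\phi)=n^{(p-1)/2}$, giving $\dist(\phi)=n^{(p-1)/2}$, which yields the upper bound $c_{2}(X,d_{X}^{p/2})\leq n^{(p-1)/2}$ for $p\geq 1$.

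Next I would establish the matching lower bound via Corollary \ref{Euclidean Distortion Formula} applied to the matrix $Q$ from Lemma \ref{Hamming Cube Nullity}. The diagonal entries contribute nothing since $d_{X}(x,x)^{p}=0$. The remaining positive entries come from antipodal pairs ($d_{X}(x,y)=n$ with $q_{x,y}=1$); there are exactly $2^{n}$ such ordered pairs, each contributing $n^{p}$, so the positive sum equals $2^{n}n^{p}$. The negative entries come from edge pairs ($d_{X}(x,y)=1$ with $q_{x,y}=-1$); there are $n\cdot 2^{n}$ such ordered pairs, each contributing $-1$, so the negative sum equals $-n\cdot 2^{n}$. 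Plugging into Corollary \ref{Euclidean Distortion Formula} gives
\[
c_{2}(X,d_{X}^{p/2})^{2}\geq\frac{2^{n}n^{p}}{n\cdot 2^{n}}=n^{p-1},
\]
closing the gap and proving $c_{2}(X,d_{X}^{p/2})=n^{(p-1)/2}$ for $p\geq 1$.

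Finally, for $0\leq p\leq 1$ I would argue that $\wp_{X}=1$, which by Theorem \ref{Infinte Distorted Embedding Theorem} forces $c_{2}(X,d_{X}^{p/2})=1$ on this range. Taking $p=1$ in the computation above shows $\phi$ is an actual isometry $(X,d_{X}^{1/2})\to\R^{n}$, so $X$ has $1$-negative type, i.e. $\wp_{X}\geq 1$. Conversely, if $\wp_{X}>1$, then $X$ would have $(1+\varepsilon)$-negative type for some $\varepsilon>0$, giving $c_{2}(X,d_{X}^{(1+\varepsilon)/2})=1$, contradicting the lower bound $n^{\varepsilon/2}>1$ just established. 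Hence $\wp_{X}=1$ and the piecewise formula for $c_{2}$ is complete. The only mildly delicate step is the bookkeeping in the polygonal-equality computation (correctly counting antipodal and edge pairs), but no serious obstacle is expected.
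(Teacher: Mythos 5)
Your proposal is correct and follows essentially the same route as the paper: the identity $\|x-y\|_{2}^{2}=d_{X}(x,y)$ gives $\dist(\phi)=n^{(p-1)/2}$, the matrix $Q$ of Lemma \ref{Hamming Cube Nullity} plugged into Corollary \ref{Euclidean Distortion Formula} (with the same pair counts $2^{n}$ and $n2^{n}$) gives the matching lower bound, and the range $0\leq p\leq 1$ is handled by the isometry at $p=1$ together with nesting. Your phrasing of the last step via $\wp_{X}=1$ is a cosmetic variant of the paper's direct appeal to Theorem \ref{Strict Nesting}.
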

	\begin{proof}
		Note that if $x_{k},y_{k}\in\{0,1\}$ then $x_{k}-y_{k}\in\{0,\pm1\}$ and so $(x_{k}-y_{k})^{2}=|x_{k}-y_{k}|$. Hence if $x=(x_{1},\dots,x_{n})^{T},y=(y_{1},\dots,y_{n})^{T}\in X$ then
		\[ \|\phi(x)-\phi(y)\|_{2}^{2}=\sum_{k=1}^{n}(x_{k}-y_{k})^{2}=\sum_{k=1}^{n}|x_{k}-y_{k}|=d_{X}(x,y).
		\]
		Hence if $d_{X}(x,y)=k$ then $\|\phi(x)-\phi(y)\|_{2}=k^{1/2}$. Since $p\geq 1$ the expansion and contraction of $\phi$ are then given by
		\[ \expan(\phi)=\sup_{\substack{x,y\in X \\ x\neq y}}\frac{\|\phi(x)-\phi(y)\|_{2}}{d_{X}(x,y)^{p/2}}=\max_{1\leq k\leq n}\frac{k^{1/2}}{k^{p/2}}=\max_{1\leq k\leq n}k^{(1-p)/2}=1
		\]
		and
		\[ \cont(\phi)=\sup_{\substack{x,y\in X \\ x\neq y}}\frac{d_{X}(x,y)^{p/2}}{\|\phi(x)-\phi(y)\|_{2}}=\max_{1\leq k\leq n}\frac{k^{p/2}}{k^{1/2}}=\max_{1\leq k\leq n}k^{(p-1)/2}=n^{(p-1)/2}.
		\]
		Hence the distortion of $\phi$ is given by
		\[ \dist(\phi)=\cont(\phi)\times\expan(\phi)=n^{(p-1)/2}.
		\]	
		Now all that we need to do is show that this cannot be improved. Define $Q=(q_{x,y})_{x,y\in X}$ as in Lemma \ref{Hamming Cube Nullity}. Then by Corollary \ref{Euclidean Distortion Formula} we have that
		\begin{align*}
			c_{2}(X,d_{X}^{p/2})^{2}&\geq-\frac{\sum_{q_{x,y}>0}d_{X}(x,y)^{p}q_{x,y}}{\sum_{q_{x,y}<0}d_{X}(x,y)^{p}q_{x,y}}
			\\&=-\frac{\sum_{d_{X}(x,y)=n}d_{X}(x,y)^{p}q_{x,y}}{\sum_{d_{X}(x,y)=1}d_{X}(x,y)^{p}q_{x,y}}
			\\&=-\frac{2^{n}\times n^{p}\times 1}{n2^{n}\times 1^{p}\times -1}
			\\&=n^{p-1}.
		\end{align*}
		Hence if $p\geq 1$ then
		\[ c_{2}(X,d_{X}^{p/2})=n^{(p-1)/2}.
		\]
		Note in particular that $c_{2}(X,d_{X}^{1/2})=1$. The fact that $c_{2}(X,d_{X}^{p/2})=1$ for all $0\leq p<1$ now follows from Theorem \ref{Strict Nesting}.
	\end{proof}
	
	\begin{corollary}
		Let $(X,d_{X})$ be the Hamming cube $H_{n}$, where $n\geq 2$, $0\leq p<\infty$ and $1\leq C<\infty$. Then $X$ has $p$-negative type with distortion $C$ if and only if $0\leq p\leq 1$, or $p>1$ and
		\[ C\geq n^{(p-1)/2}.
		\]
	\end{corollary}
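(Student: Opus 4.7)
The proof is essentially a direct application of Theorem \ref{Infinte Distorted Embedding Theorem} combined with the Euclidean distortion computation from Theorem \ref{Hamming Cube Embedding}. The plan is to observe that by Theorem \ref{Infinte Distorted Embedding Theorem}, the statement ``$X$ has $p$-negative type with distortion $C$'' is equivalent to ``$(X,d_X^{p/2})$ embeds in a Hilbert space with distortion at most $C$'', which in turn is equivalent (by definition of the Euclidean distortion) to the numerical inequality $C\geq c_2(X,d_X^{p/2})$.

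Now I would split into the two cases provided by the explicit formula in Theorem \ref{Hamming Cube Embedding}. If $0\leq p\leq 1$, then $c_2(X,d_X^{p/2})=1$, so the condition $C\geq c_2(X,d_X^{p/2})$ reduces to $C\geq 1$, which holds automatically by the standing hypothesis $1\leq C<\infty$; thus every such $C$ works. If $p>1$, then $c_2(X,d_X^{p/2})=n^{(p-1)/2}$, so the condition becomes exactly $C\geq n^{(p-1)/2}$.

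There is no real obstacle here: all of the content has been done in Theorem \ref{Hamming Cube Embedding} (the optimal distortion computation via the explicit embedding $\phi(x)=x$ together with the lower bound furnished by the matrix $Q$ from Lemma \ref{Hamming Cube Nullity}). The corollary is just a transcription of that distortion formula through the bi-lipschitz characterisation in Theorem \ref{Infinte Distorted Embedding Theorem}, so the proof should be no more than one or two sentences citing these two results.
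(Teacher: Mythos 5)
Your proposal is correct and matches the paper's proof exactly: the paper likewise disposes of this corollary in one line as a direct consequence of Theorem \ref{Hamming Cube Embedding} and Theorem \ref{Infinte Distorted Embedding Theorem}. The only implicit point (that the infimum defining $c_{2}(X,d_{X}^{p/2})$ is attained for a finite space, so that $C\geq c_{2}(X,d_{X}^{p/2})$ really does yield an embedding with distortion at most $C$) is already supplied by the explicit optimal embedding in Theorem \ref{Hamming Cube Embedding}, which you cite.
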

	\begin{proof}
		This is a direct consequence of the above theorem and Theorem \ref{Infinte Distorted Embedding Theorem}.
	\end{proof}
	
	\begin{corollary}
		Let $(X,d_{X})$ be the Hamming cube $H_{n}$, where $n\geq 2$, $0\leq p<\infty$ and $1\leq C<\infty$. Then $X$ has strict $p$-negative type with distortion $C$ if and only if $0\leq p<1$, or $p\geq1$ and
		\[ C>n^{(p-1)/2}.
		\]
	\end{corollary}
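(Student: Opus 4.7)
The plan is to derive this corollary by combining Theorem \ref{Hamming Cube Embedding} with the general characterisation of strict distorted $p$-negative type given in Corollary \ref{Distorted Strict Characterisation}. The first step is simply to read off $\wp_X$ from the embedding theorem: since $c_{2}(X,d_{X}^{p/2})=1$ precisely when $0\leq p\leq 1$ and strictly exceeds $1$ once $p>1$, the supremal $p$-negative type of $X=H_n$ must equal $\wp_X=1$.

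Next I would invoke Corollary \ref{Distorted Strict Characterisation}, which says that a finite semi-metric space has strict $p$-negative type with distortion $C$ if and only if either $p<\wp_X$ or $C>c_{2}(X,d_{X}^{p/2})$. Substituting $\wp_X=1$ and the explicit value of $c_{2}(X,d_{X}^{p/2})$ from Theorem \ref{Hamming Cube Embedding}, the first alternative becomes $0\leq p<1$, and the second becomes $p\geq 1$ together with $C>n^{(p-1)/2}$. These two disjuncts are exactly the stated condition, so the equivalence follows immediately.

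There is essentially no obstacle: the only thing to check carefully is the boundary case $p=1$. Here $c_{2}(X,d_{X}^{1/2})=1$, so the condition $C>n^{(p-1)/2}$ reads $C>1$, which is correctly captured by the second alternative (and not the first, since $p=1\not<\wp_X$). Thus no separate argument is needed, and the proof reduces to a one-line citation of Theorem \ref{Hamming Cube Embedding} and Corollary \ref{Distorted Strict Characterisation}, mirroring the structure of the analogous corollary for $K_{m,n}$ above.
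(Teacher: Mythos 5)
Your proposal is correct and matches the paper's proof exactly: the paper also derives this corollary as a direct consequence of Theorem \ref{Hamming Cube Embedding} and Corollary \ref{Distorted Strict Characterisation}. Your extra check of the boundary case $p=1$ is sound and consistent with the intended reading.
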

	\begin{proof}
		This is a direct consequence of Theorem \ref{Hamming Cube Embedding} and Corollary \ref{Distorted Strict Characterisation}.
	\end{proof}
	
	\section*{Acknowledgments}
	
	The work of the author was supported by the Research Training Program of the Department of Education and Training of the
	Australian Government. The author also wishes to thank Ian Doust for his help in reading over many drafts of this paper.
	
	\newpage
	
	\bibliographystyle{amsalpha}

\end{document}